\documentclass[12pt,a4paper]{amsart}

\usepackage[utf8]{inputenc}
\usepackage[T1]{fontenc}
\usepackage[english]{babel}
\usepackage{cite}

\usepackage{indentfirst}
\usepackage{amssymb}
\usepackage{amsfonts}
\usepackage{amsmath}
\usepackage{amsthm}
\usepackage[top=2.5cm, bottom=2.5cm, left=2.5cm, right=2.5cm]{geometry}
\usepackage{amsopn}
\usepackage[colorlinks]{hyperref}
\usepackage{mathrsfs}
\usepackage{amsxtra}
\usepackage{color}
\usepackage{dsfont}
\usepackage{cleveref}

\newcommand{\al}{\alpha}
\newcommand{\be}{\beta}
\newcommand{\si}{\sigma}
\newcommand{\om}{\omega}

\theoremstyle{plain}
\newtheorem{thm}{Theorem}

\newtheorem{lem}[thm]{Lemma}

\newtheorem{cor}[thm]{Corollary}

\theoremstyle{definition}

\newtheorem*{example*}{Example}

\newtheorem*{rem*}{Remark}
\newtheorem{rem}[thm]{Remark}

\newcommand{\dd}{d_{\Omega}}
\newcommand{\meas}{\left|\mathbb{S}^{d-1}\right|}

\newcommand{\Om}{\Omega}
\newcommand{\ga}{\gamma}
\newcommand{\ve}{\varepsilon}
\newcommand{\va}{\varphi}
\newcommand{\Sd}{\mathbb{S}^{d-1}}

\newcommand{\R}{\mathbb{R}}
\newcommand{\Rplus}{\mathbb{R}^d_+}

\DeclareMathOperator{\sgn}{sgn}

\DeclareMathOperator{\B}{B}

\DeclareMathOperator{\2F1}{_2F_1}

\DeclareSymbolFont{bbsymbol}{U}{bbold}{m}{n}
\DeclareMathSymbol{\ind}{\mathbin}{bbsymbol}{'061}

\title[Fractional Hardy--Maz'ya inequality on a half-space]{Fractional Hardy--Maz'ya inequality on a half-space}
\author[M{.} Kijaczko]{Micha\l{} Kijaczko}
\author[A{.} Szczukiewicz]{Antoni Szczukiewicz}

\keywords{fractional Sobolev space, Hardy inequality, Maz'ya inequality, fractional Hardy inequality, Gagliardo seminorm, ground-state representation, remainder}
\subjclass[2020]{Primary 46E35; Secondary 39B72, 26D15}

\address[ M.K. and A.S.]{Faculty of Pure and Applied Mathematics\\ Wroc{\l}aw University 
	of Science and Technology\\
	Wybrze\.ze Wyspia\'nskiego 27,
	50-370 Wroc{\l}aw, Poland
}
\email{michal.kijaczko@pwr.edu.pl}
\email{antoniszczukiewicz037@gmail.com, 264032@student.pwr.edu.pl}

\thanks{The second-named author was supported by the OPUS grant 25
2023/49/B/ST1/00678 of the National Science Centre (Poland)}

\begin{document}
\maketitle
\begin{abstract}
The main purpose of this article is to provide a fractional counterpart of the well-known Maz'ya inequality on the half-space, that is
$$
\int_{\Rplus}\int_{\Rplus}\frac{|u(x)-u(y)|^p}{|x-y|^{d+sp}}dy\,dx\ge\mathcal{D}_{d,s,p}\int_{\Rplus}\frac{|u(x)|^p}{x_d^{sp}}dx+C_{d,s,p,\tau}\int_{\Rplus}\frac{|u(x)|^p}{x_{d}^{sp-\tau}\left(x_{d-1}^2+x_d^2\right)^{\tau/2}}dx,
$$
where $\mathcal{D}_{d,s,p}$ stands for the sharp constant in the fractional Hardy inequality on a half-space $\Rplus$. We also obtain a similar result in the setting of Sobolev--Bregman forms.
\end{abstract}
\section{Introduction}
\subsection{Classical Hardy inequalities}
Hardy inequalities play a central role in mathematical analysis, due to their connections with various fields, such as partial differential equations, mathematical physics, probability theory and stochastic processes, to name just a few. Recall that the one-dimensional Hardy inequality is 
$$
\int_{0}^{\infty}|u'(x)|^p\,dx\ge\left(\frac{p-1}{p}\right)^p\int_{0}^{\infty}\frac{|u(x)|^p}{x^p}dx,\quad p>1,u\in C_c^1((0,\infty)).
$$
This inequality can be generalised in many ways. We will be particularly interested in the case of the half-space
$$
\Rplus:=\left\{x=(x_1,\dots,x_d)\in\R^d\colon x_d>0\right\}.
$$
A related Hardy inequality is of the form 
\begin{equation}\label{classicalHardyhalfspace}
\int_{\Rplus}|\nabla u(x)|^p\,dx\ge\left(\frac{p-1}{p}\right)^p\int_{\Rplus}\frac{|u(x)|^p}{x_d^p}dx,\quad u\in C_c^1\left(\Rplus\right).    
\end{equation}
The constant $\left((p-1)/p\right)^p$ is sharp in \eqref{classicalHardyhalfspace}, that is, it cannot be replaced by a bigger one. Moreover, it can be shown that there exists no function $u$, not constantly equal to zero, such that \eqref{classicalHardyhalfspace} becomes an equality. Therefore, it is natural to search for lower bounds for the so-called \emph{Hardy difference}
$$
I_p(u):=\int_{\Rplus}|\nabla u(x)|^p\,dx-\left(\frac{p-1}{p}\right)^p\int_{\Rplus}\frac{|u(x)|^p}{x_d^p}\,dx.
$$
It turns out that this quantity can be estimated from below in various ways. For example, Hardy--Sobolev--Maz'ya inequality \cite{MR2863763,MR2424899} is the combination of Hardy and Sobolev inequalities and has the form
$$
I_p(u)\ge C_{d,p}\left(\int_{\Rplus}|u(x)|^qdx\right)^{p/q},
$$
where $2\le p<d$ and $q=\tfrac{dp}{d-p}$. For related results, see \cite{MR3736851}.

In this paper we are interested in another strengthening of \eqref{classicalHardyhalfspace}, that is the inequality
\begin{equation}\label{Hardyhalfspacewithremainder}
 \int_{\Rplus}|\nabla u(x)|^p\,dx\ge\left(\frac{p-1}{p}\right)^p\int_{\Rplus}\frac{|u(x)|^p}{x_d^p}\,dx+C(p,\tau)\int_{\Rplus}\frac{|u(x)|^p}{x_d^{p-\tau}\left(x_{d-1}^2+x_d^2\right)^{\tau/2}}\,dx,   
\end{equation}
where $u\in C_c^1\left(\Rplus\right)$, $C(p,\tau)$ is a positive constant and $\tau>0$. Inequality \eqref{Hardyhalfspacewithremainder} is sometimes referred to as Maz'ya inequality, or Hardy--Maz'ya inequality, and has a long history, dating back to 1970's. A special case of \eqref{Hardyhalfspacewithremainder} for $p=2$ and $\tau=1$ was obtained by Maz'ya \cite{Mazya1972} and later appeared in Maz'ya book \cite{MR2755141}. A generalised version was conjectured by Maz'ya and proved to be true by Tidblom for $0<\tau\le1$ \cite{MR2124873}. However, it is not known, what is the optimal value of $C(p,\tau)$, even for $p=2$ and the problem of finding the best constant in \eqref{Hardyhalfspacewithremainder} seems to be extremely challenging. Tidblom \cite[Corollary 3.1]{MR2124873} obtained the bound 
$$
C(p,1)\ge D(p)\left(\frac{p-1}{p}\right)^{p-1},
$$
where 
$$
D(p)=\begin{cases}
\displaystyle\frac{2}{2+3p},\quad 1<p<2,\\
\displaystyle\frac{1}{4(p-1)},\quad p\ge 2,
\end{cases}
$$
and, more generally, 
\begin{equation}\label{Cptau}
C(p,\tau)\ge D(p,\tau)\left(\frac{p-1}{p}\right)^{p-1},
\end{equation}
where
$$
D(p,\tau)=\begin{cases}
\displaystyle\frac{\tau^2}{2(1+p\tau^2)},\quad 1<p<2,\\
\displaystyle\frac{\tau^2}{2(p-1)(1+2\tau^2)},\quad p\ge 2
\end{cases}
$$
(notice that $D(p,1)\neq D(p)$). For example, $C(2,1)\ge\tfrac{1}{8}$.

Later, Maz'ya and Shaposhnikova \cite{MR2508844} showed that the largest value of $\Lambda$ in the inequality 
\begin{equation}\label{mazyashaposhnikovaC(2,1)}
 \int_{\Rplus}|\nabla u(x)|^2\,dx\ge\frac{1}{4}\int_{\Rplus}\frac{|u(x)|^2}{x_d^2}\,dx+\Lambda\int_{\Rplus}\frac{|u(x)|^2}{x_d\left(x_{d-1}^2+x_d^2\right)^{1/2}}\,dx
\end{equation}
is given by 
\begin{equation}\label{Lambda}
\Lambda=\inf\frac{\displaystyle\int_{0}^{\pi}\left[y'(t)^2+\frac{1}{4}y^2(t)\right]\sin t\,dt}{\displaystyle\int_{0}^{\pi}y^2(t)\,dt}\approx 0.1564,
\end{equation}
where the infimum is taken over all smooth functions on $[0,\pi]$, not constantly equal to zero. Thus, the problem of determining the exact value of the optimal constant $C(p,\tau)$ in \eqref{Hardyhalfspacewithremainder} in the case of $p=2$ requires knowledge on the first eigenvalue of certain differential operators. The inequality \eqref{mazyashaposhnikovaC(2,1)} arises in studies concerning degeneration in the oblique derivative problem for second order elliptic differential operators. In the same paper, the authors also consider more general expressions on the right-hand side of \eqref{mazyashaposhnikovaC(2,1)}.

A version of the inequality \eqref{Hardyhalfspacewithremainder} for half-space in the Heisenberg group was obtained in \cite[Theorem 1.1]{MR2442182}.
\subsection{Fractional Hardy inequalities}
Let us now briefly introduce basic facts on fractional Hardy inequalities and fractional Sobolev spaces. For $0<s<1$, $p\geq 1$ and $\Omega\subset\R^d$ being an open set, the latter is defined as
\begin{equation}\label{Wsp}
W^{s,p}(\Om)=\left\{u\in L^p(\Om)\colon\int_{\Om}\int_{\Om}\frac{|u(x)-u(y)|^p}{|x-y|^{d+sp}}\,dy\,dx<\infty\right\}.
\end{equation}
The space $W^{s,p}(\Om)$ is a Banach space endowed with the norm 
$$
\|u\|_{W^{s,p}(\Om)}=\|u\|_{L^p(\Om)}+\left(\int_{\Om}\int_{\Om}\frac{|u(x)-u(y)|^p}{|x-y|^{d+sp}}\,dy\,dx\right)^{\frac{1}{p}}.
$$
In this context, fractional Hardy inequality reads
\begin{equation}\label{generalfractionalHardy}
 \int_{\Om}\int_{\Om}\frac{|u(x)-u(y)|^p}{|x-y|^{d+sp}}\,dy\,dx\ge C_{d,s,p,\Om}\int_{\Om}\frac{|u(x)|^p}{\dd(x)^{sp}}\,dx,\quad u\in C_c(\Om).   
\end{equation}

Here and elsewhere, $\dd(x)=\inf_{y\in\partial\Om}|x-y|$ denotes distance to the boundary. Dyda and Vähäkangas \cite{MR3237044} proved that \eqref{generalfractionalHardy} holds under some assumptions on parameters $s,p,d$ and regularity of $\Omega$. This inequality is closely related to the density property of smooth, compactly supported functions in $W^{s,p}(\Om)$, see \cite{MR4454384}. Sharp constants for \eqref{generalfractionalHardy} are known for some special choices of the underlying domain. For $\Om=\R^d\setminus\{0\}$ and $p=2$, the sharp constant in \eqref{generalfractionalHardy} was found independently by Herbst \cite{MR436854}, Beckner \cite{MR1254832} and Yafaev \cite{MR1717839}. The case $p\ge 1$ was resolved by Frank and Seiringer \cite{MR2469027}. For the case of $\Om=\Rplus$, the best constant in \eqref{generalfractionalHardy} was obtained by Bogdan and Dyda \cite{MR2663757} for $p=2$ and Frank and Seiringer in \cite{MR2723817} for general $p\geq 1$. Loss and Sloane \cite{MR2659764} showed that \eqref{generalfractionalHardy} holds for all convex, proper domains in $\R^d$ with the same optimal constant as for the half-space. This constant takes the form 
\begin{equation}\label{Dsp}
\mathcal{D}_{d,s,p}=2\pi^{\frac{d-1}{2}}\frac{\Gamma\left(\frac{1+sp}{2}\right)}{\Gamma\left(\frac{d+sp}{2}\right)}\int_{0}^{1}\frac{\left|1-t^{(sp-1)/p}\right|^p}{(1-t)^{1+sp}}\,dt.    
\end{equation}
Explicitly, the following sharp fractional Hardy inequality holds,
\begin{equation}\label{sharpfractionalHardyhalfspace}
 \int_{\Rplus}\int_{\Rplus}\frac{|u(x)-u(y)|^p}{|x-y|^{d+sp}}\,dy\,dx\ge\mathcal{D}_{d,s,p}\int_{\Rplus}\frac{|u(x)|^p}{x_d^{sp}}dx,
\end{equation}
for $0<s<1$, $p\ge 1$, $u\in C_c\left(\Rplus\right)$ if $sp>1$ or $u\in C_c\left(\overline{\Rplus}\right)$ if $sp<1$.

Recent developments of \eqref{generalfractionalHardy} include weighted cases \cite{MR4705882}, inequalities with the distance to any flat submanifold \cite{MR5015195,MR4849884, KijaczkoSahu2026}, critical cases \cite{MR5026388,MR5018356}, Sobolev--Slobodeckij spaces \cite{MR4800921}, Orlicz spaces \cite{MR4815911}, Heisenberg group \cite{MR3807591,MR4908058} or Triebel--Lizorkin spaces \cite{Kijaczko2025arxiv}. In particular, the weighted fractional Hardy inequality
\begin{equation}\label{weightedsharpfractionalHardyhalfspace}
 \int_{\Rplus}\int_{\Rplus}\frac{|u(x)-u(y)|^p}{|x-y|^{d+sp}}x_d^{\al}y_d^{\be}\,dy\,dx\ge\mathcal{D}_{d,s,p,\al,\be}\int_{\Rplus}\frac{|u(x)|^p}{x_d^{sp-\al-\be}}dx,
\end{equation}
where $\al,\be\in(-1,sp)$ and $\al+\be>-1$, $0<s<1$, $p\ge 1$, $u\in C_c\left(\Rplus\right)$ if $sp-\al-\be>1$ and $u\in C_c\left(\overline{\Rplus}\right)$, if $sp-\al-\be<1$, is given in \cite[Theorem 1]{MR4705882}, or \cite[Theorem 2]{Kijaczko2025arxiv}. Here
\begin{equation}\label{Dspalbe}
\mathcal{D}_{d,s,p,\al,\be}=\pi^{\frac{d-1}{2}}\frac{\Gamma\left(\frac{1+sp}{2}\right)}{\Gamma\left(\frac{d+sp}{2}\right)}\int_{0}^{1}\frac{\left(t^{\al}+t^{\be}\right)\left|1-t^{(sp-\al-\be-1)/p}\right|^p}{(1-t)^{1+sp}}\,dt    
\end{equation}
is the sharp constant in \eqref{weightedsharpfractionalHardyhalfspace}. We will abbreviate $\mathcal{D}_{d,s,p,\al}:=\mathcal{D}_{d,s,p,\al,\al}$.

Our main result, Theorem \ref{thm1}, concerns a lower bound, analogous to \eqref{Hardyhalfspacewithremainder}, for the fractional Hardy difference
$$
I_{s,p}(u):= \int_{\Rplus}\int_{\Rplus}\frac{|u(x)-u(y)|^p}{|x-y|^{d+sp}}\,dy\,dx-\mathcal{D}_{d,s,p}\int_{\Rplus}\frac{|u(x)|^p}{x_d^{sp}}dx.
$$
In contrast to the local case, the literature concerning inequalities involving the quantity $I_{s,p}(u)$ is comparatively sparse. It is however known that the fractional counterpart of the Hardy--Sobolev--Maz'ya inequality, that is
\begin{equation}\label{HSM}
I_{s,p}(u)\ge C_{d,s,p}\left(\int_{\Rplus}|u(x)|^q\,dx\right)^{p/q},\quad q=\frac{dp}{d-sp},
\end{equation}
holds for $1<p<\tfrac{d}{s}$, see \cite{MR3803664, MR2910984, MR4708667, KijaczkoSahu2026,MR2823046}. For fractional Hardy inequalities on bounded domains with remainder terms, we refer to \cite{MR2755892} or to recent \cite{DiebTemgoua2026arxiv}. 
\subsection{Sobolev--Bregman forms}
For $p=2$, the Hardy inequality \eqref{generalfractionalHardy} can be generalised, using the notion of the so-called \emph{Sobolev--Bregman form}. The latter expression is defined as
$$
E_p[u]=\int_{\Om}\int_{\Om}\frac{\left(u(x)-u(y)\right)(u(x)^{\langle p-1\rangle}-u(y)^{\langle p-1\rangle})}{|x-y|^{d+\al}}\,dy\,dx,
$$
where $p>1$, $0<\al<2$ and  $a^{\langle t \rangle}:=|a|^{t}\text{sgn}(a)$ is the \emph{French power}. The sharp constant in the Hardy-type inequality
\begin{equation}\label{HardySobolevBregman}
E_p[u]\ge C_{d,\al,p,\Om}\int_{\Om}\frac{|u(x)|^p}{\dd(x)^{\al}}\,dx
\end{equation}
was found in \cite{MR4372148} for the whole space and in \cite{MR4720167} for half-space and convex domains. Notably, by \cite[Theorem 1.1]{MR4720167}, the following Hardy-type inequality holds for $u\in C_c\left(\Rplus\right)$,
\begin{equation}\label{HardySobolevBregmanhalfspace}    
\int_{\Rplus}\int_{\Rplus}\frac{\left(u(x)-u(y)\right)(u(x)^{\langle p-1\rangle}-u(y)^{\langle p-1\rangle})}{|x-y|^{d+\al}}\,dy\,dx\ge \mathcal{D}'_{d,\al,p}\int_{\Rplus}\frac{|u(x)|^p}{x_d^{\al}}\,dx,
\end{equation}
where 
\begin{equation}\label{Ddalp'}
\mathcal{D}'_{d,\al,p}=-\frac{2\pi^{\frac{d-1}{2}}\Gamma\left(\tfrac{1+\al}{2}\right)}{\Gamma\left(\tfrac{d+\al}{2}\right)}\left[\B\left(\tfrac{\al-1}{p}+1,-\al\right)+\B\left(\al-\tfrac{\al-1}{p},-\al\right)+\frac{1}{\al}\right]
\end{equation}
is the sharp constant (here $\B$ denotes the Euler Beta function).

Sobolev--Bregman forms are of independent interest, as they seem to serve as a natural tool to study $L^p$ properties of nonlocal operators. For example, in \cite{MR4372148} they were used to characterise the $L^p$ contractivity of the Feynman--Kac semigroup generated by the fractional Laplacian perturbed by Hardy potential. For their various applications and occurrences, we refer, for example, to \cite{MR4589708,MR4851904,MR4885983} and references therein.

Similarly as in the case of Gagliardo seminorm, we will be particularly interested in bounding from below the Hardy-type difference
$$
J_{\al,p}(u):=\int_{\Rplus}\int_{\Rplus}\frac{\left(u(x)-u(y)\right)(u(x)^{\langle p-1\rangle}-u(y)^{\langle p-1\rangle})}{|x-y|^{d+\al}}\,dy\,dx-\mathcal{D}'_{d,\al,p}\int_{\Rplus}\frac{|u(x)|^p}{x_d^{\al}}\,dx.
$$
\subsection{Main results}
\begin{thm}\label{thm1}
 Let $0<s<1$, $p>1$, and $d\ge 2$. Moreover, let $0<\tau<\tfrac{p(sp+1)}{2(p-1)}$, if $p\ge 2$ and $0<\tau<2\min\{sp,1\}$, if $1<p<2$. There exists a positive constant $C_{d,s,p,\tau}$ such that for all $u\in C_c\left(\Rplus\right)$ if $sp\ge 1$ or $u\in C_c\left(\overline{\Rplus}\right)$, if $sp<1$, 
 \begin{equation}\label{mainresult1}
 \int_{\Rplus}\int_{\Rplus}\frac{|u(x)-u(y)|^p}{|x-y|^{d+sp}}dy\,dx\ge\mathcal{D}_{d,s,p}\int_{\Rplus}\frac{|u(x)|^p}{x_d^{sp}}dx+C_{d,s,p,\tau}\int_{\Rplus}\frac{|u(x)|^p}{x_{d}^{sp-\tau}\left(x_{d-1}^2+x_d^2\right)^{\tau/2}}dx,
 \end{equation}
 where $\mathcal{D}_{d,s,p}$ is the sharp fractional Hardy constant given by \eqref{Dsp}.
\end{thm}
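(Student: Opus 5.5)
We use a ground-state substitution in the variable $x_d$, followed by a two-dimensional reduction. Put $w(x)=x_d^{(sp-1)/p}$. This is the (generalised) ground state of the half-space problem: its defining property is
\[
2\,\mathrm{p.v.}\int_{\Rplus}\frac{(w(x)-w(y))^{\langle p-1\rangle}}{|x-y|^{d+sp}}\,dy=\mathcal{D}_{d,s,p}\,\frac{w(x)^{p-1}}{x_d^{sp}} .
\]
Write $u=wv$. We invoke the Frank--Seiringer ground state representation. For $p\ge2$ it gives
\[
I_{s,p}(u)\ge c_p\int_{\Rplus}\int_{\Rplus}\frac{|v(x)-v(y)|^p}{|x-y|^{d+sp}}\,w(x)^{p/2}w(y)^{p/2}\,dy\,dx .
\]
For $1<p<2$ the analogous lower bound has a weight of the form $|v(x)-v(y)|^2\bigl(w(x)w(y)\bigr)^{p/2}\,(|v(x)|+|v(y)|)^{p-2}$. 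The elementary inequality behind both is $|a-t|^p\ge(1-t)^{p-1}(|a|^p-t)+c_p\,t^{p/2}|a-1|^p$ when $p\ge 2$, with the standard modification for $1<p<2$. This reduces Theorem~\ref{thm1} to a weighted fractional Hardy inequality for $v$ with weight $x_d^{\al}y_d^{\al}$, where $\al=(sp-1)/2$. Up to the constant, the target right-hand side becomes
\[
\int_{\Rplus}\frac{|v(x)|^p\,x_d^{\tau-1}}{(x_{d-1}^2+x_d^2)^{\tau/2}}\,dx .
\]

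The key step is to prove this weighted inequality. We obtain it from \eqref{weightedsharpfractionalHardyhalfspace}, but applied to the distance to the codimension-two set $\{x_{d-1}=x_d=0\}$ rather than to $\partial\Rplus$. Set $\rho(x)=(x_{d-1}^2+x_d^2)^{1/2}$ and freeze the first $d-2$ coordinates. The standard slicing argument (the Loss--Sloane/Dyda method of integrating over lines, or simply restricting the double integral to pairs of points differing only in $(x_{d-1},x_d)$ together with a Fubini-type averaging) reduces the problem to the two-dimensional upper half-plane with weights $x_2^{\al}y_2^{\al}$. In polar coordinates $(r,\theta)$, $\theta\in(0,\pi)$, we have $x_2=r\sin\theta$, so the target weight becomes $r^{-sp}(\sin\theta)^{\tau-1}$ times the Jacobian. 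Taking $\tau\le1$ first and interpolating where needed, we split the left-hand kernel into two parts. The radial part is a one-dimensional weighted fractional Hardy inequality in $r$ of Frank--Seiringer type, with a positive constant as long as the relevant exponent is non-critical. The angular part handles the $\sin\theta$ degeneracy via a one-dimensional weighted Hardy inequality on $(0,\pi)$ near the endpoints.

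The admissible range of $\tau$ comes out of this last step. We need $(\sin\theta)^{\tau-1}\,w$-weights to be integrable against the angular fractional Hardy weight $(\sin\theta)^{2\al}$. For $p\ge2$ this yields $\tau<p(sp+1)/(2(p-1))$. For $1<p<2$ the $|v|^{p-2}$-weighted form needs an extra convexity step (Hölder/Jensen), which is what restricts us to $\tau<2\min\{sp,1\}$.

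The main obstacle is the absence of a clean weighted two-dimensional inequality with mixed weights $x_d^{\al}y_d^{\al}$ and target $\rho^{-\tau}x_d^{\tau-1}$. We plan to handle it by a dyadic decomposition in $\rho$. On each annulus $\{2^k\le\rho<2^{k+1}\}$ we apply \eqref{weightedsharpfractionalHardyhalfspace} (the version for $u\in C_c(\overline{\Rplus})$ when $sp-2\al<1$, which holds here since $sp-2\al=1$ is the borderline) together with a local fractional Poincaré inequality, and then sum the pieces by a Hardy-type summation in $k$. Since constants need not be sharp, we only need the positivity of $C_{d,s,p,\tau}$, which survives this decomposition. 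Finally we undo the substitution $v=u/w$; the factor $w^p=x_d^{sp-1}$ converts $x_d^{\tau-1}$ into $x_d^{-(sp-\tau)}$, giving \eqref{mainresult1}.
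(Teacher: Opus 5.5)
Your first step coincides with the paper's. The ground-state representation with $w(x)=x_d^{(sp-1)/p}$ reduces \eqref{mainresult1} to a bound
\[
R:=\int_{\Rplus}\int_{\Rplus}\frac{|v(x)-v(y)|^p}{|x-y|^{d+sp}}(x_dy_d)^{\frac{sp-1}{2}}\,dy\,dx\ \ge\ C\int_{\Rplus}\frac{|v(x)|^p\,x_d^{\tau-1}}{(x_{d-1}^2+x_d^2)^{\tau/2}}\,dx .
\]
The gap is in how you prove this bound.

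\textbf{The critical weight.} The weight $(x_dy_d)^{\al}$ with $\al=\frac{sp-1}{2}$ is exactly boundary-critical: $sp-2\al=1$. So the factor $|1-t^{(sp-2\al-1)/p}|^p$ in \eqref{Dspalbe} vanishes identically and $\mathcal{D}_{d,s,p,\al}=0$. Hence \eqref{weightedsharpfractionalHardyhalfspace}, which is only stated for $sp-\al-\be\neq1$, gives nothing here, on the whole half-space or on annuli. Your parenthetical ``$sp-2\al<1$, which holds here since $sp-2\al=1$'' contradicts itself. What remains of the plan is an unspecified weighted fractional Poincar\'e inequality at this critical weight on half-annuli touching $\partial\Rplus$, plus a ``radial/angular'' splitting of a nonlocal kernel. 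That is the whole difficulty, not a standard tool.

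\textbf{Other unsound steps.}
\begin{itemize}
\item Restricting the double integral to pairs differing only in $(x_{d-1},x_d)$ means integrating over a null subset of $\Rplus\times\Rplus$, so no 2D reduction of the inequality follows. The paper never reduces the inequality itself; it only integrates out $d-2$ variables inside the explicit potential in Lemma \ref{lemma: weighted plaplacian}.
\item In polar coordinates the target weight is $r^{-1}(\sin\theta)^{\tau-1}$, not $r^{-sp}(\sin\theta)^{\tau-1}$.
\item Your account of the thresholds $\frac{p(sp+1)}{2(p-1)}$ and $2\min\{sp,1\}$ is not a derivation. Since $x_d\le (x_{d-1}^2+x_d^2)^{1/2}$, the last term of \eqref{mainresult1} is nonincreasing in $\tau$, so no boundary-integrability argument can produce an upper bound on $\tau$. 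In the paper these bounds come from the ranges of $\ga$ for which the potentials converge, e.g.\ $\ga<\frac{sp-\be}{p-1}$ in Lemma \ref{lemma: weightedLaplacian}.
\end{itemize}

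\textbf{The missing idea} is a mechanism that moves the weight off the critical exponent. The paper applies a second ground-state representation to $R$ itself, with kernel $|x-y|^{-d-sp}(x_dy_d)^{\frac{sp-1}{2}}$ and $\om(x)=|\widetilde x|^{\ga}$. Its potential is computed explicitly: it equals $|\widetilde x|^{-\frac{sp+1}{2}}x_d^{\frac{sp-1}{2}}$ times an angular factor. That factor is bounded below by a positive constant for $\ga_0=-\frac1p$, and by a negative constant for $\ga=\frac{\tau}{p}>0$. This gives
\[
R\ge aY\quad\text{and}\quad R\ge -bY+c_pZ,\qquad Y=\int_{\Rplus}|v|^p|\widetilde x|^{-\frac{sp+1}{2}}x_d^{\frac{sp-1}{2}}\,dx,
\]
hence $R\ge\frac{ac_p}{a+b}Z$. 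Here $Z$ carries the extra weight $(|\widetilde x||\widetilde y|)^{\tau/2}$. Using $|\widetilde x|\ge x_d$ turns this into the weight $(x_dy_d)^{\frac{sp+\tau-1}{2}}$, which is subcritical. Now \eqref{weightedsharpfractionalHardyhalfspace} applies with the positive constant $\mathcal{D}_{d,s,p,\frac{sp+\tau-1}{2}}$ and yields exactly the desired term.

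For $1<p<2$ the same elimination is run in the quadratic setting for $v^{p/2}$ with weight $\min\{x_d^{sp-1},y_d^{sp-1}\}$, via Corollaries \ref{weighted laplacian Rd minimum}, \ref{cor: liminf sp < 1} and \eqref{Hardy identity with min}.

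Your dyadic plan might perhaps be developed into a proof, but as written no step produces a positive constant at the critical weight.
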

\color{black}
\begin{thm}\label{thm2}  Let $0<\alpha<2$, $p>1$, $d \ge 2$, and $0<\tau<1+\al$. There exists a positive constant $C'_{d,\alpha,p,\tau}$ such that for all $u\in C_c\left(\Rplus\right)$ if $\alpha\ge 1$ or $u\in C_c\left(\overline{\Rplus}\right)$, if $\alpha<1$, 
    \begin{equation}\label{mainresult2}
    \begin{split}
        \int_{\Rplus}\int_{\Rplus}\frac{\left(u(x)-u(y)\right)(u(x)^{\langle p-1\rangle}-u(y)^{\langle p-1\rangle})}{|x-y|^{d+\al}}\,dy\,dx &\geq \mathcal{D}_{d,\alpha,p}'\int_{\Rplus}\frac{|u(x)|^p}{x_d^{\alpha}}dx
        \\
        &+C'_{d,\alpha, p, \tau}\int_{\Rplus}\frac{|u(x)|^p}{x_{d}^{\al-\tau}\left(x_{d-1}^2+x_d^2\right)^{\tau/2}}dx,
    \end{split}
    \end{equation}
    where $\mathcal{D}_{d,\alpha,p}'$ is the sharp constant given by \eqref{Ddalp'}.
\end{thm}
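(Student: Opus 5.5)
The plan is a ground-state (supersolution) argument: we perturb the half-space ground state by a lower-order term whose potential carries the Maz'ya weight. The starting point is the representation behind \eqref{HardySobolevBregmanhalfspace} in \cite{MR4720167}. For a positive function $h$ on $\Rplus$ that is regular enough for the operator
$$
Lh(x)=\mathrm{p.v.}\int_{\Rplus}\frac{h(x)-h(y)}{|x-y|^{d+\al}}\,dy
$$
to make sense, one has for $u\in C_c(\Rplus)$ (or $C_c(\overline{\Rplus})$ if $\al<1$)
$$
E_p[u]\ge \int_{\Rplus}|u(x)|^p\,\frac{Lh(x)}{h(x)}\,dx .
$$
This follows from the pointwise Bregman inequality obtained by substituting $u=h v$ and using convexity of $|\cdot|^p$. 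The half-space ground state $h_0(x)=x_d^{\kappa}$, with $\kappa=(\al-1)/p$ or the appropriate exponent from \cite{MR4720167}, gives $Lh_0/h_0=\mathcal{D}'_{d,\al,p}x_d^{-\al}$ exactly. If I recall the exact form of the identity wrongly (e.g.\ that it should be applied to $h^{p-1}$ rather than $h$), I will take it in whatever form \cite{MR4720167} uses; the scheme below is unchanged.

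I will use the perturbed test function
$$
h_\varepsilon(x)=x_d^{\kappa}+\varepsilon\,x_d^{\kappa+\tau}\left(x_{d-1}^2+x_d^2\right)^{-\tau/2},\qquad \varepsilon>0 \text{ small},
$$
which is homogeneous of degree $\kappa$ and depends only on $(x_{d-1},x_d)$. Integrating out $x_1,\dots,x_{d-2}$ reduces $L$ acting on such functions to a two-dimensional operator with kernel $c_{d,\al}|x-y|^{-2-\al}$ on the half-plane. By homogeneity, $L$ of the second summand is $x_d^{\kappa-\al}$ times a function $G(\theta)$ of the polar angle $\theta\in(0,\pi)$, where $\sin\theta=x_d/r$ and $r=(x_{d-1}^2+x_d^2)^{1/2}$. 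By linearity,
$$
Lh_\varepsilon=\mathcal{D}'_{d,\al,p}\,x_d^{\kappa-\al}+\varepsilon\,x_d^{\kappa-\al}G(\theta).
$$
Dividing by $h_\varepsilon=x_d^{\kappa}(1+\varepsilon\sin^\tau\theta)$ gives
$$
\frac{Lh_\varepsilon}{h_\varepsilon}-\frac{\mathcal{D}'_{d,\al,p}}{x_d^\al}
=\frac{\varepsilon\bigl(G(\theta)-\mathcal{D}'_{d,\al,p}\sin^\tau\theta\bigr)}{x_d^{\al}\,(1+\varepsilon\sin^\tau\theta)}.
$$
Hence it suffices to show that $G(\theta)\ge c\,\sin^\tau\theta$ for some $c>\mathcal{D}'_{d,\al,p}$. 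The target weight is $x_d^{\tau-\al}r^{-\tau}=x_d^{-\al}\sin^\tau\theta$, so this is exactly the required shape.

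The main obstacle is this angular estimate for $G$. Two regimes matter.
\begin{itemize}
\item Near the boundary, $\theta\to0$ or $\theta\to\pi$, the second summand looks locally like the one-dimensional power $x_d^{\kappa+\tau}$. Its $L$-image has the coefficient of the one-dimensional fractional Laplacian of $t^{\kappa+\tau}$ on $(0,\infty)$. This coefficient, as a function of the exponent, is a Beta-function expression of the same type as in \eqref{Ddalp'}. The identity $\kappa+\tau-\al<\kappa$ together with the range $0<\tau<1+\al$ should place the exponent where this coefficient is strictly larger than the ground-state value, since the ground-state exponent sits at the extremum. This yields $G(\theta)\ge c\sin^\tau\theta$ with $c>\mathcal{D}'$ near the boundary, up to an error that vanishes as $\theta\to 0$.
\item In the interior, $\theta\in[\delta,\pi-\delta]$, $G$ is continuous. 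If $G-\mathcal{D}'\sin^\tau$ fails to be positive there, I will add a third correction of the form $\varepsilon^2 x_d^{\kappa}\phi(\theta)$ and use the smallness of $\varepsilon$ to absorb the deficit.
\end{itemize}

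I would first try to compute $G$ explicitly through hypergeometric integrals (hence $\2F1$) after passing to polar coordinates. Failing that, I will bound the near-boundary contribution by comparison with the one-dimensional calculation, and treat the far region as a perturbation controlled by $\varepsilon$. Finally, to justify the ground-state inequality for $h_\varepsilon$, I will use a cutoff and approximation argument near $\partial\Rplus$. For $\al<1$ this needs the admissibility of $u\in C_c(\overline{\Rplus})$, exactly as in \cite{MR4720167}. Once these steps are in place, the constant $C'_{d,\al,p,\tau}$ is given by $\varepsilon$ times the angular lower bound.
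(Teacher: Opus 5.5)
There is a genuine gap, and it is at the step you flag as the main obstacle. The angular estimate $G(\theta)\ge c\sin^\tau\theta$ with $c>\mathcal{D}'_{d,\alpha,p}$ is false near the boundary, because the extremum you appeal to goes the other way.

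Test it at $p=2$. There the Bregman form is the Gagliardo form, the potential really is a multiple of $Lh/h$, and your linearity step is legitimate. Write $Lx_d^{\beta}=C(\beta)x_d^{\beta-\alpha}$ and normalise, as you do, so that $\mathcal{D}'=C(\kappa)$ with $\kappa=\frac{\alpha-1}{2}$. Let $f(y)=y_d^{\kappa+\tau}|\widetilde{y}|^{-\tau}$ be your second summand. Your near-boundary heuristic is correct and gives $G(\theta)/\sin^\tau\theta\to C(\kappa+\tau)$ as $\theta\to0$, for $\tau<\alpha-\kappa$. For larger $\tau$ the far-field term $-\int f(y)|x-y|^{-d-\alpha}\,dy$ even makes $G<0$ there.

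But $\kappa$ maximises $C$. If some $\beta$ had $C(\beta)>C(\kappa)$, then $x_d^{\beta}$ would be a positive supersolution giving a Hardy inequality with a constant larger than the sharp one. Hence $C(\kappa+\tau)\le\mathcal{D}'$, in fact strictly. So $G(\theta)-\mathcal{D}'\sin^\tau\theta<0$ near $\theta=0,\pi$: your perturbation pushes the potential below $\mathcal{D}'x_d^{-\alpha}$, by $\varepsilon$ times a multiple of exactly the Maz'ya weight. An $\varepsilon^2$ interior correction cannot absorb an $O(\varepsilon)$ deficit located at the boundary.

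The local analogue makes this explicit. For $-\Delta$ with $h_0=x_d^{1/2}$ and $r=|\widetilde{x}|$, one computes $G(\theta)-\tfrac14\sin^\tau\theta=\tau\sin^\tau\theta\bigl((1+\tau)\sin^2\theta-\tau\bigr)$. The obstruction is also structural. A supersolution $r^{1/2}\psi(\theta)$ with extra potential $c\,x_d^{-2}\sin^\tau\theta$ must satisfy $-\psi''-\tfrac14\psi-\frac{\psi}{4\sin^2\theta}\ge c\sin^{\tau-2}\theta\,\psi$. Multiply by $\sin^{1/2}\theta$, which solves the corresponding equation, and integrate. For $\psi=\sin^{1/2}\theta\,(1+\varepsilon\sin^\tau\theta)$, or any $\psi$ with that boundary behaviour, the Wronskian boundary terms vanish, which forces $c\le0$. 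So no angular profile at the ground-state degree of homogeneity can work; the homogeneity has to change.

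There is a secondary problem for $p\neq2$. By \eqref{groundstateSobolevBregman} the Bregman potential is $\frac{2(p-1)}{p}\frac{Lh}{h}+\frac{2}{p}\frac{L(h^{p-1})}{h^{p-1}}$, which is not linear in $h$, so your decomposition of $Lh_\varepsilon$ does not carry over.

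The paper sidesteps both issues as follows.
\begin{enumerate}
\item It starts from the identity \eqref{groundstateSobolevBregman} with $\omega=x_d^{(\alpha-1)/p}$.
\item It uses $F_p(a,b)\ge c_p^*\bigl(a^{\langle p/2\rangle}-b^{\langle p/2\rangle}\bigr)^2$ and $a^2+b^2\ge 2ab$ to bound the remainder below by $\iint (w(x)-w(y))^2(x_dy_d)^{(\alpha-1)/2}|x-y|^{-d-\alpha}\,dy\,dx$ with $w=v^{\langle p/2\rangle}$. This is the $p=2$ remainder from Theorem~\ref{thm1} with $s=\alpha/2$, so everything is linear from here on.
\item It applies the ground-state representation to this weighted form twice, with $\omega=|\widetilde{x}|^{\gamma}$. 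The choice $\gamma=-\frac12$, a change of homogeneity, gives a positive potential bounded below by a multiple of $|\widetilde{x}|^{-(\alpha+1)/2}x_d^{(\alpha-1)/2}$. The choice $\gamma=\frac{\tau}{2}>0$ gives the same quantity with a negative sign, plus a new remainder.
\item It eliminates the common term via $X\ge aY$, $X\ge -bY+Z\Rightarrow X\ge\frac{a}{a+b}Z$.
\item It bounds $Z$ below using $|\widetilde{x}|\ge x_d$ and the weighted fractional Hardy inequality \eqref{weightedsharpfractionalHardyhalfspace}, which produces the Maz'ya weight. The constraint $\tau<1+\alpha$ is exactly the admissible range for $\gamma=\tau/2$ in these steps.
\end{enumerate}
The Maz'ya term thus comes from a Hardy inequality applied to a second remainder, not from a single perturbed supersolution.
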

The constants $C_{d, s, p, \tau}$ and $C'_{d, \alpha, p, \tau}$ are explicit, although of rather complicated form, and can be found in the proofs. However, there is no reason to believe that these constants are sharp. We suppose that the problem of determining the sharp constants in \eqref{mainresult1} and \eqref{mainresult2} goes way beyond methods used in this article and requires completely new ideas. As mentioned in the introduction, this problem is already very complicated in the local case.  Moreover, it is not clear if the assumptions regarding the range of the parameter $\tau$ in Theorems \ref{thm1} and \ref{thm2} are optimal and we could not manage to answer this question in this paper. 

We would like to emphasize that for $sp=1$ in \eqref{sharpfractionalHardyhalfspace} and for $\al=1$ in \eqref{HardySobolevBregmanhalfspace}, the constants $\mathcal{D}_{d,s,p}$ and $\mathcal{D}'_{d,\al,p}$ vanish, that is the corresponding Hardy inequalities become trivial. However, one may check that the constants $C_{d,s,p,\tau}$ and $C'_{d,\al,p,\tau}$ are positive also in these cases. In consequence, \eqref{mainresult1} and \eqref{mainresult2} are nontrivial.

\section{Proofs}
\subsection{Ground-state representation}
Let us recall basic concepts on the so-called \emph{ground state representation}, described by Frank and Seiringer in \cite{MR2469027}. This tool is available in both local and nonlocal setting. Thus, let us treat the local case first. Let $\Om\subset\R^d$ be open and nonempty. By \cite[(2.12)]{MR2469027}, if $g$ is nonnegative, $\om$ is positive and satisfies the weighted Euler--Lagrange equation in $\Om$,
\begin{equation}\label{V}
V(x)\om(x)^{p-1}=-\text{div}\left(g(x)|\nabla \om(x)|^{p-2}\nabla \om(x)\right),
\end{equation}
then, for $p\ge 2$, the following weighted Hardy inequality with a remainder,
\begin{equation}\label{generalweightedHardy}
\int_{\Om}|\nabla u(x)|^pg(x)\,dx\ge\int_{\Om}|u(x)|^pV(x)\,dx+c_p\int_{\Om}\left|\nabla v(x)\right|^p \om(x)^{p}g(x)\,dx,
\end{equation}
holds for any compactly supported $u$ satisfying $\int_{\Omega} |u(x)|^{p} V_{+}(x) \, dx < \infty$, where $V_+=\max\{0,V\}$. Here $v=u/\om$ and
\begin{equation}\label{cp}
c_p=\min_{0<\xi<\frac{1}{2}}\left((1-\xi)^p-\xi^p+p\xi^{p-1}\right).
\end{equation}
When $p=2$, \eqref{generalweightedHardy} becomes an equality with $c_2=1$. For $1<p<2$, the remainder term is more complicated and is given by
$$
\int_{\Om}|\nabla v(x)|^2\om(x)^2\left(|\nabla v(x)|\om(x)+|v(x)||\nabla \om(x)|\right)^{p-2}g(x)\,dx,
$$
see \cite[Lemma 2.2]{MR2400106}.

Let us now move to the fractional setting. Following again the framework in \cite{MR2469027}, consider a symmetric, nonnegative, measurable kernel $k(x,y)$ and a nonempty open set $\Omega \subset \mathbb{R}^{d}$. We define the functional
\begin{equation}
E[u] := \int_{\Omega} \int_{\Omega} |u(x)-u(y)|^{p} k(x,y) \, dy \, dx
\end{equation} 
and 
\begin{equation}
V_{\varepsilon} (x) := 2 \omega(x)^{-p+1} \int_{\Omega} (\omega(x)-\omega(y)) |\omega(x)-\omega(y)|^{p-2} k_{\varepsilon}(x,y) \, dy,
\end{equation}
where $\omega$ is a positive, measurable function on $\Omega$ and $\{k_{\varepsilon}(x,y) \}_{\varepsilon>0}$ is a family of measurable, symmetric kernels satisfying the assumptions $0 \leq k_{\varepsilon}(x,y) \leq k(x,y)$, $\lim_{\varepsilon \to 0} k_{\varepsilon}(x,y) = k(x,y)$ for almost all $x,y \in \Omega$. Assuming the integrals that define $V_{\varepsilon}$ are absolutely convergent for almost every $x \in \Omega$ and that $V_{\varepsilon}$ converges weakly in $L^{1}_{\text{loc}}(\Omega)$ to some function $V$, as $\ve\rightarrow 0^+$, for $p\ge 2$ we have the following Hardy-type inequality:
\begin{equation}\label{General Hardy inequality}
E[u] \geq \int_{\Omega} |u(x)|^{p}V(x) \, dx+c_p E_{\om}[u],
\end{equation}
for any compactly supported $u$ satisfying $\int_{\Omega} |u(x)|^{p} V_{+}(x) \, dx < \infty$, see \cite[Proposition $2.2$]{MR2469027}. Here $c_p$ is given by \eqref{cp} and the remainder term is
$$
E_{\om}[u]=\int_{\Om}\int_{\Om}\left|v(x)-v(y)\right|^p k(x,y)\om(x)^{p/2}\om(y)^{p/2}\,dy\,dx,\quad v=\tfrac{u}{\om}.
$$
As in the local case, \eqref{General Hardy inequality} becomes an equality for $p=2$. The remainder in the case $1<p<2$ was found by Dyda and Kijaczko in \cite{MR4708667}  (see also \cite{MR4597627}). Denoting
\begin{equation*} \widetilde{E}_{\omega}[u]:=\int_{\Omega}\int_{\Omega}\left(u(x)^{\langle p/2 \rangle }-u(y)^{\langle p/2 \rangle }\right)^2W(x,y)k(x,y)\,dy\,dx,
\end{equation*}
where 
\begin{equation*}
    W(x,y):=\min\{\omega(x),\omega(y)\}\max\{\omega(x),\omega(y)\}^{p-1}=\omega(x)\omega(y)\max\{\omega(x),\omega(y)\}^{p-2},
\end{equation*}
we have 
\begin{equation}\label{General Hardy with weight1<p<2}
E[u] - \int_{\Omega} |u(x)|^{p}V(x) \, dx \geq C_{p} \widetilde{E}_{\omega}[v] , \hspace{3mm} \quad v=\tfrac{u}{\om},
\end{equation} 
where $C_{p}=\max\left\{\tfrac{p-1}{p},\tfrac{p(p-1)}{2}\right\}$. When $u$ is a nonnegative function, $C_p$ can be taken as $p-1$.

Nonlocal ground-state representation is also available in the setting of Sobolev--Bregman forms, see \cite{MR4720167}. Let $p>1$. We keep the same assumptions as above, concerning the kernels $k_{\varepsilon}$ and $k$. Let 
$$
E_p[u]=\int_{\Om}\int_{\Om}(u(x)-u(y))(u(x)^{\langle p-1\rangle}-u(y)^{\langle p-1\rangle}) k(x,y)\,dy\,dx
$$
and 
$$
V_{\ve}(x)=2\int_{\Om}\left(\frac{1}{p}\frac{\om(x)^{p-1}-\om(y)^{p-1}}{\om(x)^{p-1}}+\frac{p-1}{p}\frac{\om(x)-\om(y)}{\om(x)}\right)k_{\ve}(x,y)\,dy.
$$
We assume again that $V_{\ve}$ are absolutely convergent and $V_{\ve}\rightarrow V$ weakly in $L^{1}_{\text{loc}}(\Omega)$. Moreover, for $a,b\in\R$, let us define the so-called \emph{Bregman divergence}
$$
F_p(a,b):=|b|^p-|a|^p-pa^{\langle p-1\rangle}(b-a).
$$
The function $F_p$ is always nonnegative, as the second-order Taylor remainder of the convex function $x\mapsto |x|^p$. Then, for compactly supported $u$ such that $\int_{\Omega} |u(x)|^{p} V_{+}(x) \, dx < \infty$, the following identity holds,
\begin{equation}\label{groundstateSobolevBregman}
E_p[u]=\int_{\Om}|u(x)|^pV(x)\,dx+\frac{2}{p}\int_{\Om}\int_{\Om}F_p\left(\frac{u(x)}{\om(x)},\frac{u(y)}{\om(y)}\right)\om(x)^{p-1}\om(y)k(x,y)\,dy\,dx.    
\end{equation}
In particular, we have a Hardy-type inequality
\begin{equation}\label{HardySB}
E_p[u]\ge\int_{\Om}|u(x)|^pV(x)\,dx.
\end{equation}
\begin{rem}\label{rem3}
    The assumption that $V_{\ve}\rightarrow V$ weakly in $L^1_{\text{loc}}(\Om)$ is not necessary for \eqref{General Hardy inequality}, \eqref{General Hardy with weight1<p<2} and \eqref{HardySB} to hold. It is needed to ensure that 
    $$\lim_{\ve\rightarrow 0^+}\int_{\Om}|u(x)|^p V_{\ve}(x)\,dx=\int_{\Om}|u(x)|^p V(x)\,dx,$$ 
    however, we are satisfied with
    $$\liminf_{\ve\rightarrow 0^+}\int_{\Om}|u(x)|^p V_{\ve}(x)\,dx\ge \int_{\Om}|u(x)|^p V(x)\,dx.$$
Therefore, instead of weak convergence, we can assume, for example, one of the following conditions:
\begin{align*}
(i)&\quad V_{\ve}(x)\ge V(x)\text{ for almost all }x\in\Omega;\\
(ii)&\quad V_{\ve}(x)\ge \widetilde{V}_{\ve}(x)>0\text{  and } \widetilde{V}_{\ve}(x)\rightarrow V(x) \text{ almost everywhere in }\Om.     
\end{align*}

Indeed, the proofs of \eqref{General Hardy inequality}, \eqref{General Hardy with weight1<p<2} (perhaps with inequality instead of equality for $p=2$) and \eqref{HardySB} are straightforward in the case (i), while in the case (ii) follows from Fatou's lemma.
\end{rem}
\subsection{The local case}
We will now present a way to derive inequality \eqref{Hardyhalfspacewithremainder}, using ground-state transforms. The proof is different from the one of Tidblom \cite{MR2124873}, who uses a vector field approach. Since we aim for a generalisation to the fractional setting, we need to develop a different method, because the vector field approach is not applicable here. Nevertheless, our proof is based on the following simple trick, which is also used in \cite{MR2124873}: having two estimates of the form
$$
X\ge aY,\quad X\ge-bY+Z,\quad X,Y,Z,a,b>0,
$$
we combine it into $X\ge\tfrac{a}{a+b}Z$, in order to get rid of $Y$ and obtain a relation only between $X$ and $Z$.

Let $p\ge 2$ and let $u\in C_c^1\left(\R^d_+\right)$. By the ground-state representation \eqref{generalweightedHardy} applied to $g(x)\equiv1$ and $\om(x)=x_d^{(p-1)/p}$, we have 
\begin{equation}\label{hardywithremainderhalfspace}
\int_{\Rplus}|\nabla u(x)|^p\,dx\ge\left(\frac{p-1}{p}\right)^p\int_{\Rplus}\frac{|u(x)|^p}{x_d^p}\,dx+c_p\int_{\Rplus}|\nabla v(x)|^px_{d}^{p-1}\,dx,
\end{equation}
where $v(x)=u(x)x_d^{(1-p)/p}$ and $c_p$ is given by \eqref{cp}. We will now estimate the remainder term. Let $g(x)=x_d^{p-1}$ and $\om(x)=(x_{d-1}^2+x_d^2)^{\ga/2}$, $\ga\in\R$. We will again use \eqref{generalweightedHardy} for this new choice of $g$ and $\om$. Then, we may compute that the potential $V$ given by \eqref{V} is 
\begin{equation}\label{W}
V(x)=V_{\ga}(x)=-C_p(\ga)\frac{x_d^{p-1}}{\left(x_{d-1}^2+x_d^2\right)^{p/2}},
\end{equation}
with 
$$
C_p(\ga)=\ga|\ga|^{p-2}\left(1+\ga(p-1)\right)
$$
The choice $\ga_0=-\tfrac{1}{p}$ maximises the constant $-C_p(\ga)$ and we get 
$$
V_{\ga_0}(x)=\frac{1}{p^p}\frac{x_d^{p-1}}{\left(x_{d-1}^2+x_d^2\right)^{p/2}}.
$$
This yields a Hardy inequality
\begin{align}\label{Hardy1}
\int_{\Rplus}|\nabla u(x)|^p\,dx&\ge\left(\frac{p-1}{p}\right)^p\int_{\Rplus}\frac{|u(x)|^p}{x_d^p}\,dx+\frac{c_p}{p^p}\int_{\Rplus}\frac{|u(x)|^p}{\left(x_{d-1}^2+x_d^2\right)^{p/2}}\,dx.
\end{align}
Using the ground-state representation again, for $\ga>0$, from \eqref{hardywithremainderhalfspace} and \eqref{W} we derive another Hardy inequality (applied to the remainder term $\int_{\Rplus}|\nabla v(x)|^px_{d}^{p-1}\,dx$) of the form
\begin{align}\label{Hardy2}
\nonumber\int_{\Rplus}|\nabla u(x)|^p\,dx&\ge\left(\frac{p-1}{p}\right)^p\int_{\Rplus}\frac{|u(x)|^p}{x_d^p}\,dx-c_pC_p(\ga)\int_{\Rplus}\frac{|u(x)|^p}{\left(x_{d-1}^2+x_d^2\right)^{p/2}}dx\\
&\quad+c_p^2\int_{\Rplus}|\nabla\widetilde{v}(x)|^px_{d}^{p-1}\left(x_{d-1}^2+x_d^2\right)^{p\ga/2}\,dx,
\end{align}
where $\widetilde{v}(x)=v(x)\left(x_{d-1}^2+x_d^2\right)^{-\ga/2}=u(x)x_d^{(1-p)/p}\left(x_{d-1}^2+x_d^2\right)^{-\ga/2}$. Thus, combining \eqref{Hardy1} with \eqref{Hardy2}, we arrive at the inequality
\begin{align*}
\nonumber\int_{\Rplus}|\nabla u(x)|^p\,dx&\ge\left(\frac{p-1}{p}\right)^p\int_{\Rplus}\frac{|u(x)|^p}{x_d^p}dx\\
&\quad+\frac{c_p^2}{1+C_p(\ga)p^p}\int_{\Rplus}|\nabla\widetilde{v}(x)|^px_{d}^{p-1}\left(x_{d-1}^2+x_d^2\right)^{p\ga/2}dx.    
\end{align*}
Recall the classical weighted Hardy inequality on the half-space, that is 
\begin{equation}\label{weightedclassicalHardyhalfspace}
\int_{\Rplus}|\nabla f(x)|^px_d^{\al}\,dx\ge\left|\frac{p-1-\al}{p}\right|^p\int_{\Rplus}|f(x)|^p x_d^{\al-p}\,dx
\end{equation}
valid for $f\in C_c^1\left(\Rplus\right)$ and $\al\in\R$. Using \eqref{weightedclassicalHardyhalfspace}, we can estimate
\begin{align*}
\int_{\Rplus}|\nabla\widetilde{v}(x)|^px_{d}^{p-1}\left(x_{d-1}^2+x_d^2\right)^{p\ga/2}\,dx&\ge\int_{\Rplus}|\nabla\widetilde{v}(x)|^px_{d}^{p-1+p\ga}\,dx\\
&\ge\left|\frac{p-1-(p-1+p\ga)}{p}\right|^p\int_{\Rplus}|\widetilde{v}(x)|^px_d^{-1+p\ga}\,dx\\
&=\ga^p\int_{\Rplus}\frac{|u(x)|^p}{x_d^{p-p\ga}\left(x_{d-1}^2+x_d^2\right)^{p\ga/2}}\,dx.
\end{align*}
Hence, writing now $p\ga=\tau$ and summarising all the results above, we obtain the inequality
\begin{equation}\label{mazya}
\int_{\Rplus}|\nabla u(x)|^p\,dx\ge\left(\frac{p-1}{p}\right)^p\int_{\Rplus}\frac{|u(x)|^p}{x_d^p}dx+\widetilde{C}(p,\tau)\int_{\Rplus}\frac{|u(x)|^p}{x_d^{p-\tau}\left(x_{d-1}^2+x_d^2\right)^{\tau/2}}\,dx,   
\end{equation}
where $\tau>0$ and 
$$
\widetilde{C}(p,\tau)=\frac{c_p^2\tau^p}{p^p\left(1+p^pC_p\left(\tfrac{\tau}{p}\right)\right)}=\frac{c_p^2\tau^p}{p^p\left(1+p\tau^{p-1}+(p-1)\tau^p\right)}.
$$
Needless to say, this procedure results, in general, in worse constants than the constants obtained by Tidblom in \cite{MR2124873} for $0<\tau\le 1$. For example, $\widetilde{C}(2,1)=\tfrac{1}{16}$, as originally obtained by Maz'ya, and we know that it can be increased to $\tfrac{1}{8}$, or even to \eqref{Lambda}. However, it works for all $\tau>0$, while in Tidblom's article \cite{MR2124873} the assumption $0<\tau\le 1$ is stated. Moreover, the presented approach does not seem to apply to the case of $1<p<2$ (although it will work in the fractional setting).

We can easily generalise inequality \eqref{mazya} by considering
$$
\om(x)=\left(x_{d-k}^2+\ldots+x_d^2\right)^{\gamma/2},\quad 1\leq k\le d-1.
$$
It then holds
$$
V(x)=-|\gamma|^{p-2}\ga\left(k+(p-1)\ga\right)\frac{x_d^{p-1}}{\left(x_{d-k}^2+\ldots+x_d^2\right)^{p/2}}
$$
and the choice $\ga_0=-\frac{k}{p}$ maximises the constant in front of the above expression. Similar calculations as before lead to the inequality
\begin{equation}\label{mazyak}
\int_{\Rplus}|\nabla u(x)|^p\,dx\ge\left(\frac{p-1}{p}\right)^p\int_{\Rplus}\frac{|u(x)|^p}{x_d^p}dx+C_k(p,\tau)\int_{\Rplus}\frac{|u(x)|^p}{x_d^{p-\tau}\left(x_{d-k}^2+\ldots+x_d^2\right)^{\tau/2}}\,dx,   
\end{equation}
where
$$
C_k(p,\tau)=\frac{c_p^2(k\tau)^p}{p^p\left(k^p+pk\tau^{p-1}+(p-1)\tau^p\right)}.
$$

\subsection{Weighted fractional $p$-Laplacian for power functions}
Our goal now is to extend the above reasoning to the fractional case. This will of course be much more complicated, as we will deal with nonlocal operators. To this end, in this subsection we will calculate or estimate a series of weighted fractional $p$-Laplacians acting on power functions, in order to obtain a fractional version of \eqref{W}. These results are of independent interest and might serve in the future as a tool to obtain various Hardy-type inequalities.

For $d \ge 2$ and $x = (x_1,...,x_d) \in \R^d$ we will write $\widetilde{x} := (x_{d-1}, x_{d}) \in \R^2$. For $d \ge 1$ and $x \in \R^d\setminus\{0\}$ we define $\hat{x} := \frac{x}{|x|}$. We also define $\Sd_+:=\Sd\cap\Rplus$ and $B^d_+:=B^d\cap\Rplus$. Here, $\Sd$ and $B^d$ denote the unit sphere and ball in $\R^d$, respectively.

The next lemma provides a fractional counterpart of the formula \eqref{W}. It will serve as a main tool in proving Theorem \ref{thm1} for $p\ge 2$. First of all, let us consider a family of kernels that will be helpful in deriving the ground state representation in the fractional case. For $d \geq 2$, $0 < \varepsilon < 1$ let $k_{d, \xi}^{(\varepsilon)} : \R^d \times \R^d \rightarrow [0, \infty)$ be defined as
$$k_{d, \xi}^{(\varepsilon)}(x, y) := \begin{cases}
    |x - y|^{-d-\xi}\mathds{1}_{D_\varepsilon}(|\widetilde{x}|, |\widetilde{y}|), \quad &x \neq y,
    \\
    0 \quad &x = y,
\end{cases}$$
where
\begin{align}\label{eq: D_eps}D_\varepsilon := \left\{(a, b) \in [0, \infty)^2 : a \leq b(1 - \varepsilon) \lor b \leq a(1 - \varepsilon) \right\}.
\end{align}
Observe that $D_\varepsilon$ is a cone in $[0, \infty)^2$, thus, we have 
$$k_{d, \xi}^{(\varepsilon)}(rx, ry) = r^{-d-\xi}k_{d, \xi}^{(\varepsilon)}(x, y),$$
for all $x, y \in \R^d$, and $r > 0$. Moreover, $D_{\varepsilon_2} \subset D_{\varepsilon_1}$ whenever $0 < \varepsilon_1 \leq \varepsilon_2 < 1$ and 
$$\bigcup_{0 < \varepsilon< 1}D_\varepsilon = [0, \infty)^2 \setminus \{(a, a) : a\in(0, \infty)].$$
This implies that 
$$\lim_{\varepsilon \rightarrow 0^+} k_{d, \xi}^{\varepsilon}(x, y) = |x - y|^{-d-\xi} \quad \text{a.e.}$$
\color{black} 

\begin{lem}\label{lemma: weightedLaplacian} 
Let $x\in\R^2_+$, $0<s<1$, $p > 1$, $\beta \in (-1, sp)$, and $\gamma \in \left( -\frac{\beta+2}{p-1}, \frac{sp-\beta}{p-1}\right)$. Then, uniformly on compact sets contained in $\R^2_+$,
    \begin{align*}
        \lim_{\varepsilon \rightarrow 0^+} &\int_{\R^2_+}(|x|^{\gamma} - |y|^{\gamma})||x|^\gamma - |y|^\gamma|^{p-2}y_2^{\beta}k_{2, sp}^{(\varepsilon)}(x, y)\,dy =|x|^{\gamma(p-1) + \beta - sp}F_{s, p, \beta, \gamma}(\hat{x}),
    \end{align*}
where $\va$ is the argument of $x$ and 
\begin{align}\label{F(x)}
&F_{s, p, \beta, \gamma}(\hat{x})\\
&\nonumber:=\sgn(\ga(sp - 2 - 2\beta - \gamma(p-1)))\int_{B^2_+}\frac{\left|1-|t|^{\ga}\right|^{p-1}\left|1-|t|^{sp - 2 - 2\beta - \gamma(p-1)}\right|}{|\hat{x}-t|^{sp+2}}t_2^{\be}\,dt\\
&\nonumber =\sgn(\ga(sp - 2 - 2\beta - \gamma(p-1)))\int_{0}^{1}\int_{0}^{\pi}\frac{r^{1+\be}\left|1-r^{\ga}\right|^{p-1}\left|1-r^{sp - 2 - 2\beta - \gamma(p-1)}\right|}{\left(1-2r\cos(\va-y)+r^2\right)^{\frac{sp+2}{2}}}(\sin y)^{\be}\,dy\,dr.
\end{align}
\end{lem}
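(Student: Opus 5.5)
This is a scaling argument followed by an inversion in the unit circle. Write the integrand as $\Phi(|x|^\gamma-|y|^\gamma)\,y_2^\beta\,k^{(\varepsilon)}_{2,sp}(x,y)$, where $\Phi(a)=a|a|^{p-2}$.

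\emph{Step 1: scaling.} Substitute $y=|x|z$. The homogeneity $k^{(\varepsilon)}_{2,sp}(rx,ry)=r^{-2-sp}k^{(\varepsilon)}_{2,sp}(x,y)$ pulls out the factor $|x|^{\gamma(p-1)+\beta-sp}$. What remains is
$$I_\varepsilon(\hat x)=\int_{\R^2_+}\Phi(1-|z|^\gamma)\,z_2^\beta\,k^{(\varepsilon)}_{2,sp}(\hat x,z)\,dz .$$

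\emph{Step 2: split and invert.} Split this integral into $\{|z|<1\}$ and $\{|z|>1\}$. The cutoff $\mathds{1}_{D_\varepsilon}$ removes exactly the annulus $1-\varepsilon<|z|<(1-\varepsilon)^{-1}$. On the outer part use the inversion $z=t/|t|^2$. Its Jacobian is $|t|^{-4}$, and we have $z_2=t_2|t|^{-2}$ and $|\hat x-z|=|\hat x-t|/|t|$, the last because $|\hat x|=1$. The inversion maps the half-plane to itself, sends $|z|>(1-\varepsilon)^{-1}$ to $|t|<1-\varepsilon$, and preserves the cone condition $D_\varepsilon$. Together with $\Phi(1-|t|^{-\gamma})=-|t|^{-\gamma(p-1)}\Phi(1-|t|^\gamma)$, this shows the outer part equals
$$-\int_{B^2_+,\,|t|<1-\varepsilon}\frac{\Phi(1-|t|^\gamma)\,|t|^{sp-2-2\beta-\gamma(p-1)}\,t_2^\beta}{|\hat x-t|^{2+sp}}\,dt .$$
Adding the inner part gives, for each $\varepsilon$, an integral over $\{|t|<1-\varepsilon\}\cap\R^2_+$ with the combined factor
$$\Phi(1-|t|^\gamma)\,\bigl(1-|t|^{\,sp-2-2\beta-\gamma(p-1)}\bigr).$$
Both factors have constant sign on $(0,1)$. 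The first has the sign of $\gamma$, and the second the sign of $sp-2-2\beta-\gamma(p-1)$. So the combined factor equals $\sgn(\gamma(sp-2-2\beta-\gamma(p-1)))\,|1-|t|^\gamma|^{p-1}\,|1-|t|^{sp-2-2\beta-\gamma(p-1)}|$, which is exactly the integrand of $F_{s,p,\beta,\gamma}$. The polar-coordinates expression in \eqref{F(x)} follows directly.

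\emph{Step 3: the limit.} Let $\varepsilon\to0^+$ by monotone convergence, since the integrand has constant sign. It remains to show the limiting integral converges, with bounds uniform in $\hat x$ ranging over a compact arc of $\Sd_+$. Near $t=0$ the integrand is like $|t|^{\min(0,\gamma)(p-1)+\min(0,\,sp-2-2\beta-\gamma(p-1))}t_2^\beta$. This is integrable because $\beta>-1$ and because of the bounds on $\gamma$: the inner constraint $\gamma(p-1)>-(\beta+2)$ and the outer constraint $\gamma(p-1)<sp-\beta$ (after inversion) give exactly the needed exponents. Near the boundary $t_2=0$, the condition $\beta>-1$ suffices.

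\emph{Main obstacle.} The delicate point is the singularity at $t=\hat x$ on $|t|=1$. There the kernel is $|\hat x-t|^{-2-sp}$, while the combined factor vanishes like $(1-|t|)^{p-1}(1-|t|)=(1-|t|)^{p}$, and $p>sp$ makes this integrable. Concretely, integrate in the angular variable first: $\int_0^\pi(1-2r\cos(\varphi-y)+r^2)^{-(2+sp)/2}\,dy\lesssim(1-r)^{-1-sp}$, which is then integrable against $(1-r)^p$. This bound is uniform in $\hat x$ as long as $\hat x$ stays away from the boundary of $\R^2_+$, where $\sin y$ vanishes and the weight $(\sin y)^\beta$ interacts with the angular singularity. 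That gives the uniformity on compact subsets of $\R^2_+$, noting that $|x|$ is also bounded above and below on such sets.
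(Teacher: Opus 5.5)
Your proposal is correct and follows the paper's proof. You rescale by $|x|$ using the homogeneity of $k^{(\varepsilon)}_{2,sp}$, split along $D_\varepsilon$, move the outer region to $\{|t|\le 1-\varepsilon\}$ with the inversion $t\mapsto t/|t|^2$, combine the two pieces into a constant-sign integrand, and let $\varepsilon\to0^+$ by monotone convergence. Your Step 3 also fills in details the paper only asserts: integrability at $t=0$ from the constraints on $\gamma$ and $\beta$, the $(1-r)^{p}$ versus $(1-r)^{-1-sp}$ balance at $|t|=1$, and keeping $\hat x$ away from $\partial\R^2_+$ to handle $(\sin y)^\beta$ when $\beta<0$, which is what gives the uniformity.
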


\begin{proof} For $\varepsilon > 0$ we have
\begin{align*}
    A_\varepsilon := &\int_{\R^2_+}(|x|^{\gamma} - |y|^{\gamma})||x|^\gamma - |y|^\gamma|^{p-2}y_2^{\beta}k_{2, sp}^{(\varepsilon)}(x, y)\,dy
    \\
    = &|x|^{\gamma(p-1) + \beta - sp}\int_{\R_+^2}\frac{(1 - |t|^{\gamma})|1 - |t|^\gamma|^{p-2}}{|\hat{x} - t|^{sp +2}}t_2^{\beta} \mathds{1}_{D_\varepsilon}(1, |t|)dt.
\end{align*}
Observe that, from the definition \eqref{eq: D_eps} of $D_\varepsilon$, the indicator $\mathds{1}_{D_\varepsilon}(1, |t|)$ splits $A_\varepsilon$ into two integrals over 
$$\{ t \in \R^2_+ : |t| \leq 1 - \varepsilon \} \quad \text{ and } \quad \{ t \in \R^2_+ : |t| \geq (1 - \varepsilon)^{-1} \}.$$ 
It may be checked that the assumptions on $\ga$ and $\be$ guarantee convergence of both integrals. By the change of variables $t\mapsto\frac{t}{|t|^2}$ with the Jacobian $|t|^{-4}$, we have
\begin{align*}
    &\int_{\{t \in \R_+^2 : |t| \geq (1 - \varepsilon)^{-1}\}}\frac{(1 - |t|^{\gamma})|1 - |t|^\gamma|^{p-2}}{|\hat{x} - t|^{sp +2}}t_2^{\beta}\,dt.
    \\
    &= \int_{\{t \in \R_+^2 : |t| \leq 1 - \varepsilon\}}
    \frac{|t|^{sp - 2 - 2\beta - \gamma(p-1)}(|t|^{\ga} - 1)|1 - |t|^\gamma|^{p-2}}{|\hat{x} - t|^{sp +2}}t_2^{\beta}\,dt\\
\end{align*}
Thus, we get 
\begin{align*}
    A_\varepsilon = |x|^{\gamma(p-1) + \beta - sp} \int_{\{t \in \R_+^2 : |t| \leq 1-\varepsilon\}}\frac{(1 - |t|^{sp - 2 - 2\beta - \gamma(p-1)})(1 - |t|^\gamma)|1 - |t|^{\ga}|^{p-2}}{|\hat{x} - t|^{2 + sp}}t_2^\beta\,dt,
\end{align*}
Now, since the integrand in $A_\varepsilon$ does not change sign for $t \in B^2_+$, the desired result follows from the monotone convergence theorem.
\end{proof}

\begin{rem}\label{rem8}
For fixed $\be$, $s$ and $p$, it is elementary to check that the choice 
$$
\ga_0=\frac{sp-2-2\be}{p}
$$
maximises the function $\ga\mapsto F_{s,p,\be,\ga}(\hat{x})$, for fixed $s$, $p$, $\be$ and $\hat{x}$,  where $F_{s,p,\be,\ga}$ is given by \eqref{F(x)}, on an interval where $F_{s,p,\be,\ga}(\hat{x})$>0. 
\end{rem}

\begin{lem}\label{lemma: weighted plaplacian}
Let $0<s<1$, $d\ge 2$, $p > 1$, $\beta \in (-1, sp)$, and $\gamma \in \left( -\frac{\beta+2}{p-1}, \frac{sp-\beta}{p-1}\right)$. Let  $\om(x):=\left(x_{d-1}^2+x_d^2\right)^{\frac{\ga}{2}}=|\widetilde{x}|^{\ga}$. Then, 
\begin{align*}
\lim_{\ve\rightarrow 0^+}&\int_{\Rplus}\left(\om(x)-\om(y)\right)\left|\om(x)-\om(y)\right|^{p-2}y_d^{\be}k_{d, sp}^{(\varepsilon)}(x, y)\,dy=\mathcal{A}_{d, sp}|\widetilde{x}|^{\gamma(p-1) + \beta - sp}F_{s,p,\be,\ga}(\hat{x}),
\end{align*}
uniformly on compact sets contained in $\Rplus$. Here $F_{s,p,\be,\ga}(\hat{x})$ is given by \eqref{F(x)} and 
$$\mathcal{A}_{d, \xi} := \frac{\pi^{\frac{d-2}{2}}\Gamma\left(\frac{2+\xi}{2}\right)}{\Gamma\left(\frac{d+\xi}{2}\right)}, \quad \xi > -2.$$
\end{lem}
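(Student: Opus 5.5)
The plan is to reduce Lemma \ref{lemma: weighted plaplacian} to Lemma \ref{lemma: weightedLaplacian} by integrating out the first $d-2$ coordinates. Write $y=(y',\widetilde{y})$ with $y'=(y_1,\dots,y_{d-2})\in\R^{d-2}$ and $\widetilde{y}=(y_{d-1},y_d)\in\R^2_+$. Both $\om(y)=|\widetilde{y}|^{\ga}$ and the indicator $\mathds{1}_{D_\ve}(|\widetilde{x}|,|\widetilde{y}|)$ depend only on $\widetilde{y}$, and so does the weight $y_d^{\be}$. Hence, by Tonelli (after splitting into the regions $|\widetilde y|<|\widetilde x|$ and $|\widetilde y|>|\widetilde x|$, where the integrand has constant sign), the integral equals
\begin{equation*}
\int_{\R^2_+}\left(|\widetilde{x}|^{\ga}-|\widetilde{y}|^{\ga}\right)\left||\widetilde{x}|^{\ga}-|\widetilde{y}|^{\ga}\right|^{p-2}y_d^{\be}\mathds{1}_{D_\ve}(|\widetilde{x}|,|\widetilde{y}|)\left(\int_{\R^{d-2}}\frac{dy'}{\left(|x'-y'|^2+|\widetilde{x}-\widetilde{y}|^2\right)^{\frac{d+sp}{2}}}\right)d\widetilde{y}.
\end{equation*}

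The inner integral is computed by translation $y'\mapsto y'+x'$, the scaling $y'=|\widetilde x-\widetilde y|z$, and polar coordinates in $\R^{d-2}$:
\begin{equation*}
\int_{\R^{d-2}}\frac{dz'}{(|z'|^2+a^2)^{\frac{d+sp}{2}}}=a^{-2-sp}\,|\mathbb{S}^{d-3}|\int_0^\infty\frac{r^{d-3}}{(1+r^2)^{\frac{d+sp}{2}}}dr=a^{-2-sp}\,\frac{\pi^{\frac{d-2}{2}}\Gamma\left(\frac{2+sp}{2}\right)}{\Gamma\left(\frac{d+sp}{2}\right)},
\end{equation*}
using $|\mathbb{S}^{d-3}|=2\pi^{(d-2)/2}/\Gamma(\frac{d-2}{2})$ and the Beta integral $\int_0^\infty r^{d-3}(1+r^2)^{-(d+sp)/2}dr=\tfrac12\B(\tfrac{d-2}{2},\tfrac{2+sp}{2})$. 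This gives exactly $\mathcal{A}_{d,sp}\,|\widetilde{x}-\widetilde{y}|^{-2-sp}$. For $d=2$ there is no $y'$ integration and $\mathcal{A}_{2,sp}=1$, consistent with the formula.

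Therefore the $\ve$-integral in $\R^d_+$ equals $\mathcal{A}_{d,sp}$ times the $\ve$-integral of Lemma \ref{lemma: weightedLaplacian} evaluated at the point $\widetilde{x}\in\R^2_+$, with kernel $k^{(\ve)}_{2,sp}(\widetilde x,\widetilde y)$. Applying Lemma \ref{lemma: weightedLaplacian} (same ranges of $\be,\ga$) yields the limit $\mathcal{A}_{d,sp}|\widetilde{x}|^{\ga(p-1)+\be-sp}F_{s,p,\be,\ga}(\hat{\widetilde{x}})$; here $\hat x$ in the statement is understood as the direction of $\widetilde x$, i.e.\ through its argument $\va$.

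For uniformity, a compact $K\subset\Rplus$ projects under $x\mapsto\widetilde{x}$ onto a compact subset of $\R^2_+$, since $x_d$ is bounded below on $K$. The expression depends on $x$ only through $\widetilde x$, so uniform convergence on $K$ follows from the uniform convergence in Lemma \ref{lemma: weightedLaplacian}.

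The only delicate point is justifying the Fubini/Tonelli interchange and the finiteness of the split integrals, which is handled by the sign-constancy on each of the two regions together with the convergence already asserted in Lemma \ref{lemma: weightedLaplacian}.
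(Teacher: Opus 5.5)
Your proposal is correct and follows the paper's proof. The paper also integrates out the first $d-2$ variables by Fubini, cites the identity \eqref{integral identity} for the factor $\mathcal{A}_{d,sp}|\widetilde{x}-\widetilde{y}|^{-2-sp}$ (which you derive directly via polar coordinates and the Beta integral), and then applies Lemma \ref{lemma: weightedLaplacian}; your sign-splitting Tonelli justification and the transfer of uniform convergence through $x\mapsto\widetilde{x}$ are valid.
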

\begin{proof}
For $d\ge 3$, we use the identity 
\begin{equation}\label{integral identity}
\int_{\R^k}\frac{dz}{\left(|z|^2+a^2\right)^{\frac{\al}{2}}}=\frac{\pi^{\frac{k}{2}}\Gamma\left(\frac{\al-k}{2}\right)}{\Gamma\left(\frac{\al}{2}\right)}a^{k-\al} = \mathcal{A}_{k+2, \alpha-k-2}a^{k-\al},\quad a>0,\,\al>k,
\end{equation}
applied to $k=d-2$, $\al=d+sp$ and $a=|\widetilde{x}-\widetilde{y}|$. Hence, using Fubini's theorem (which we are allowed to apply, since the integrals appearing in the proof of Lemma \ref{lemma: weightedLaplacian} are absolutely convergent), after integrating over first $d-2$ variables,
\begin{align*}
\lim_{\ve\rightarrow 0^+}&\int_{\Rplus}\left(\om(x)-\om(y)\right)\left|\om(x)-\om(y)\right|^{p-2}y_d^{\be}k_{d, sp}^{(\varepsilon)}(x, y)\,dy\\
&=\mathcal{A}_{d, sp}\lim_{\ve\rightarrow 0^+}\int_{\R^2_+}\left(\om(x)-\om(y)\right)\left|\om(x)-\om(y)\right|^{p-2}y_d^{\be}k_{2, sp}^{(\varepsilon)}(\widetilde{x}, \widetilde{y})\, d\widetilde{y}
\end{align*}
and we can apply Lemma \ref{lemma: weightedLaplacian}.
\end{proof}
When $1<p<2$, the remainder term in the nonlocal ground-state representation \eqref{General Hardy with weight1<p<2} is of different, more complicated form than in the case of $p\ge 2$. Our next results contain estimates of fractional $p$-Laplacians with specific weight and will be needed to deal with the proof of Theorem \ref{thm1} for $1<p<2$.

We begin with two useful technical lemmas. By $\2F1(a,b,c;z)$ we denote the Gauss hypergeometric function.

\begin{lem}\label{integralonsphere}
Let $d\ge 2$ and $\xi > 0$. There exists a constant $C=C(d,s,p)>0$ such that for all $0<r<1$ and $\si\in\Sd$ it holds
\begin{equation}\label{estimat}
\int_{\Sd_+}\frac{d\om}{|\om-r\si|^{d+\xi}}\le \frac{C}{(1-r)^{1+\xi}}.    
\end{equation}
\end{lem}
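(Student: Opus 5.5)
Only $\sigma_d$, the last coordinate of $\sigma$, enters the estimate, so I would first reduce to an explicit one-dimensional integral. The singularity of the integrand near $\om=\si$ is then compared with the local geometry of the sphere. (The constant depends only on $d$ and $\xi$; the notation $C(d,s,p)$ in the statement reflects the later choice $\xi=sp$.) The plan is to bound the integral over $\Sd_+$ by the integral over the whole sphere $\Sd$. The integrand is positive, so this can only increase it. By rotation invariance of the surface measure we may then take $\si=e_d$.

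Writing $\om_d=\cos\theta$, we have $|\om-r e_d|^2=1-2r\cos\theta+r^2$. Hence
\begin{equation*}
\int_{\Sd}\frac{d\om}{|\om-r\si|^{d+\xi}}=|\mathbb{S}^{d-2}|\int_0^{\pi}\frac{(\sin\theta)^{d-2}\,d\theta}{\left(1-2r\cos\theta+r^2\right)^{\frac{d+\xi}{2}}}
\end{equation*}
for $d\ge3$. For $d=2$ the analogous formula holds with $|\mathbb{S}^{0}|=2$ and the factor $(\sin\theta)^{0}=1$. Next I use
\begin{equation*}
1-2r\cos\theta+r^2=(1-r)^2+2r(1-\cos\theta)\ge (1-r)^2+c\,r\,\theta^2,
\end{equation*}
valid on $[0,\pi]$ with, e.g., $c=4/\pi^2$, together with $\sin\theta\le\theta$.

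I split into the cases $r\le 1/2$ and $r>1/2$. For $r\le1/2$ the denominator is at least $(1/2)^{d+\xi}$. The integral is then bounded by a constant, which is at most $C(1-r)^{-1-\xi}$ since $1-r<1$.

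For $r>1/2$ we get the bound
\begin{equation*}
\int_0^\pi\frac{\theta^{d-2}\,d\theta}{\left((1-r)^2+\tfrac{c}{2}\theta^2\right)^{\frac{d+\xi}{2}}}.
\end{equation*}
Substituting $\theta=(1-r)u$ turns this into
\begin{equation*}
(1-r)^{d-1-(d+\xi)}\int_0^\infty\frac{u^{d-2}\,du}{\left(1+\tfrac{c}{2}u^2\right)^{\frac{d+\xi}{2}}}.
\end{equation*}
The remaining integral converges at infinity because $(d+\xi)-(d-2)=2+\xi>1$, and it converges at $0$ because $d\ge2$. This yields exactly the factor $(1-r)^{-1-\xi}$.

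The only step needing care is the convergence check and the $d=2$ bookkeeping. I expect no real obstacle: the key point is simply that the $(d-1)$-dimensional sphere concentrates mass $\sim\theta^{d-2}d\theta$ near $\si$. Against the kernel of order $d+\xi$, this produces the exponent $1+\xi$.
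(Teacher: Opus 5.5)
Your argument is correct, but after the common first step it takes a different route from the paper. Both proofs enlarge $\Sd_+$ to $\Sd$ and rotate so that only $\om_d$ matters.

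The paper then evaluates the full-sphere integral exactly, using a table formula from Gradshteyn--Ryzhik, as $|\mathbb{S}^{d-1}|\,{}_2F_1\bigl(\tfrac{d+\xi}{2},\tfrac{2+\xi}{2};\tfrac{d}{2};r^2\bigr)$. It applies Euler's transformation to pull out $(1-r^2)^{-1-\xi}$. It concludes because the remaining ${}_2F_1\bigl(-\tfrac{\xi}{2},\tfrac{d-2-\xi}{2};\tfrac{d}{2};r^2\bigr)$ has $c-a-b=1+\xi>0$ and is therefore bounded on $[0,1]$.

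You instead use three elementary ingredients:
\begin{itemize}
\item the zonal formula with weight $(\sin\theta)^{d-2}$;
\item the bound $1-2r\cos\theta+r^2\ge(1-r)^2+\tfrac{4}{\pi^2}r\theta^2$, which follows from Jordan's inequality;
\item the rescaling $\theta=(1-r)u$.
\end{itemize}
The exponent $(d-1)-(d+\xi)=-1-\xi$ then comes out of a scaling count. The points you flag all check out: the case $r\le\tfrac12$, integrability of $u^{d-2}(1+\tfrac{c}{2}u^2)^{-(d+\xi)/2}$ at $0$ and $\infty$, and the $d=2$ bookkeeping with $|\mathbb{S}^0|=2$.

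What each route buys: the paper's is shorter, and since it is an identity, Gauss's summation also shows that the rate $(1-r)^{-1-\xi}$ is attained for the full sphere. Yours avoids special functions and table look-ups and gives an explicit constant. It also makes transparent why the exponent is $1+\xi$: mass $\theta^{d-2}\,d\theta$ near the pole is weighed against a kernel of order $d+\xi$. The same argument would apply verbatim to any kernel with a comparable singularity. You are also right that the constant depends only on $d$ and $\xi$, not on $s,p$.
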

\begin{proof}
Clearly, by rotation invariance,
$$
\int_{\Sd_+}\frac{d\om}{|\om-r\si|^{d+\xi}}\le \int_{\Sd}\frac{d\om}{|\om-r\si|^{d+\xi}}=\int_{\Sd}\frac{d\om}{\left(1-2r\om_d+r^2\right)^{\frac{d+\xi}{2}}}.
$$
By \cite[(3.665)]{MR2360010} and Euler formula for the hypergeometric function,
\begin{align*}
\int_{\Sd}\frac{d\om}{\left(1-2r\om_d+r^2\right)^{\frac{d+\xi}{2}}}&=\meas\2F1\left(\tfrac{d+\xi}{2},\tfrac{2+\xi}{2},\tfrac{d}{2};r^2\right)\\
&=\meas (1-r^2)^{-1-\xi}\2F1\left(-\tfrac{\xi}{2},\tfrac{d-2-\xi}{2},\tfrac{d}{2};r^2\right)\\
&\le C(1-r)^{-1-\xi},
\end{align*}
because the value $\2F1\left(-\tfrac{\xi}{2},\tfrac{d-2-\xi}{2},\tfrac{d}{2};1\right)$ is finite, since $\tfrac{d}{2}-\tfrac{d-2-\xi}{2}-\left(\tfrac{-\xi}{2}\right)=1+\xi>0$.
\end{proof}
\begin{lem}
\label{lem: bounds for H}
    Let $\be\ge0$, $0<r<1$, $\xi > 0$, and
    $$
    H_{r, \xi, \beta}(\va):=\int_0^{\pi}\frac{\left(\sin y\right)^{\be}}{\left(1-2r\cos(\va-y)+r^2\right)^{\frac{2+\xi}{2}}}\,dy=\int_0^{\pi}\frac{\left(\sin y\right)^{\be}}{\left(1+2r\cos(\va+y)+r^2\right)^{\frac{2+\xi}{2}}}\,dy.
    $$
Then, for $0\le\va\le\pi$ we have $H_{r, \xi, \beta}(0)\le H_{r, \xi, \beta}(\va)\le H_{r, \xi, \beta}(\tfrac{\pi}{2})$.
\end{lem}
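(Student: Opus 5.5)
The plan is to use a reflection symmetry to reduce to $\va\in[0,\tfrac{\pi}{2}]$, and then show that $H_{r,\xi,\be}$ is nondecreasing there by differentiating under the integral sign and pairing symmetric points of the kernel.

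Write $g(\theta):=\left(1-2r\cos\theta+r^2\right)^{-\frac{2+\xi}{2}}$, so that $H_{r,\xi,\be}(\va)=\int_0^\pi(\sin y)^{\be}g(\va-y)\,dy$. For $0<r<1$ we have $1-2r\cos\theta+r^2\ge(1-r)^2>0$. Hence $g$ is smooth, even and $2\pi$-periodic. Moreover $1-2r\cos\theta+r^2$ is increasing in $|\theta|$ on $[0,\pi]$, so $g'(\theta)<0$ for $\theta\in(0,\pi)$ and $g'(\theta)>0$ for $\theta\in(-\pi,0)$.

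\emph{Step 1 (symmetry).} Substituting $y\mapsto\pi-y$ in $H_{r,\xi,\be}(\pi-\va)$ turns $\cos(\pi-\va-y)$ into $\cos(y-\va)$ and leaves $(\sin y)^\be$ unchanged. This gives $H_{r,\xi,\be}(\pi-\va)=H_{r,\xi,\be}(\va)$. It therefore suffices to prove that $H_{r,\xi,\be}$ is nondecreasing on $[0,\tfrac{\pi}{2}]$. That yields $H_{r,\xi,\be}(0)\le H_{r,\xi,\be}(\va)\le H_{r,\xi,\be}(\tfrac{\pi}{2})$ for $\va\in[0,\tfrac{\pi}{2}]$, and by the symmetry also for $\va\in[\tfrac{\pi}{2},\pi]$.

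\emph{Step 2 (derivative).} Since $g$ is smooth and bounded together with $g'$, differentiation under the integral sign is justified. Substituting $\theta=\va-y$ gives
$$
H_{r,\xi,\be}'(\va)=\int_0^\pi(\sin y)^\be g'(\va-y)\,dy=\int_{\va-\pi}^{\va}\left(\sin(\va-\theta)\right)^{\be}g'(\theta)\,d\theta .
$$
For $\va\in[0,\tfrac{\pi}{2}]$ we have $\va-\pi\le-\va$, so we split the range into $[\va-\pi,-\va]$ and $[-\va,\va]$.

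On $[\va-\pi,-\va]\subset[-\pi,0]$ we have $g'\ge0$ and $(\sin(\va-\theta))^\be\ge0$, because $\va-\theta\in[0,\pi]$. So this part is nonnegative.

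On $[-\va,\va]$ we pair $\theta$ with $-\theta$ and use that $g'$ is odd. This part equals
$$
\int_0^{\va}g'(\theta)\left[(\sin(\va-\theta))^\be-(\sin(\va+\theta))^\be\right]d\theta .
$$
Here $g'(\theta)\le0$. Also $0\le\va-\theta\le\va+\theta\le\pi-(\va-\theta)$, where the last inequality uses $\va\le\tfrac{\pi}{2}$, so $\sin(\va-\theta)\le\sin(\va+\theta)$. Since $\be\ge0$, the bracket is nonpositive and the integrand is nonnegative. Hence $H_{r,\xi,\be}'\ge0$ on $[0,\tfrac{\pi}{2}]$, which completes the argument.

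The only delicate point is the pairing in Step 2. It needs the correct inequality $\sin(\va-\theta)\le\sin(\va+\theta)$, which is exactly where $\va\le\tfrac{\pi}{2}$ and $\be\ge0$ are used. The case $\be=0$ causes no extra trouble: $t\mapsto t^{0}=1$ is still nondecreasing, and no integration by parts with boundary terms is needed.
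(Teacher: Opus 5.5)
Your proof is correct and is essentially the paper's argument. After your substitution $\theta=\va-y$, your split into $[\va-\pi,-\va]$ and $[-\va,\va]$ is exactly the paper's decomposition of $[0,\pi]$ (in the second form of $H_{r,\xi,\be}$) into $[0,\pi-2\va]$ and $[\pi-2\va,\pi]$, and your reflection $\theta\mapsto-\theta$ is the paper's pairing of $I_2$ with $I_3$ about $y=\pi-\va$.
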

\begin{proof}
For the simplicity let us write $H := H_{r, \xi, \beta}$. By a substitution $y\mapsto\pi-y$ it is easy to see that $H$ is symmetric on $[0,\pi]$ with respect to the midpoint. Therefore, it suffices to show that $H$ is increasing on $[0,\tfrac{\pi}{2}]$. For $\va\in(0,\tfrac{\pi}{2})$ we have
$$
H'(\va)=r(2+\xi)\int_{0}^{\pi}\frac{\left(\sin y\right)^{\be}\sin(\va+y)}{\left(1+2r\cos(\va+y)+r^2\right)^{\frac{4+\xi}{2}}}\,dy=:r(2+\xi)I(\va).
$$
We split the integration as follows,
\begin{align*}
I(\va)&=\left(\int_{0}^{\pi-2\va}+\int_{\pi-2\va}^{\pi-\va}+\int_{\pi-\va}^{\pi}\right)\frac{\left(\sin y\right)^{\be}\sin(\va+y)}{\left(1+2r\cos(\va+y)+r^2\right)^{\frac{4+\xi}{2}}}\,dy\\
&=:I_1(\va)+I_2(\va)+I_3(\va).
\end{align*}
Clearly, $I_1(\va)\ge0$. Substituting $y=\pi-\va-s$, $s\in(0,\va)$, we get
\begin{align*}
I_2(\va)=\int_{0}^{\varphi}\frac{\left(\sin(\pi-\va-s)\right)^{\be}\sin(\pi-s)}{\left(1-2r\cos s+r^2\right)^{\frac{4+\xi}{2}}}\,ds= \int_{0}^{\varphi}\frac{\left(\sin(\va+s)\right)^{\be}\sin s}{\left(1-2r\cos s+r^2\right)^{\frac{4+\xi}{2}}}\,ds.
\end{align*}
Moreover, substituting $y=\pi-\va+s$, $s\in (0,\varphi)$, we get
$$
I_3(\va)=\int_{0}^{\varphi}\frac{\left(\sin(\pi-\va+s)\right)^{\be}\sin(\pi+s)}{\left(1-2r\cos s+r^2\right)^{\frac{4+\xi}{2}}}\,ds=-\int_{0}^{\varphi}\frac{\left(\sin(\va-s)\right)^{\be}\sin s}{\left(1-2r\cos s+r^2\right)^{\frac{4+\xi}{2}}}\,ds.
$$
Thus,
$$
I_2(\va)+I_3(\va)=\int_{0}^{\va}\frac{\sin s\left[\left(\sin(\va+s)\right)^{\be}-\left(\sin(\va-s)\right)^{\be}\right]}{\left(1-2r\cos s+r^2\right)^{\frac{4+\xi}{2}}}\,ds.
$$
Since $\be\ge 0$, to justify that $I_2(\va)+I_3(\va)\ge 0$, we need to ensure that $\sin(\va+s)\ge \sin(\va-s)$. This is satisfied, because we have $\va+s,\va-s\in(0,\pi)$ and $|\va+s-\tfrac{\pi}{2}|<|\va-s-\tfrac{\pi}{2}|$, which is equivalent to $(\va-\tfrac{\pi}{2})s<0$, which is true. Hence, $I(\va)\ge 0$ and the proof is complete.
\end{proof}

\begin{rem}
When $-1<\be<0$, the assertion of Lemma \ref{lem: bounds for H} is no longer true. In this case, since $(1-r)^2\le1-2r\cos(\va-y)+r^2\le (1+r)^2$, we can bound $H_{r, \xi, \beta}(\va)$ in the non-optimal way, that is $(1+r)^{-2-\xi}\int_{0}^{\pi}(\sin y)^{\be}\,dy\le H_{r, \xi, \beta}(\va)\le (1-r)^{-2-\xi}\int_{0}^{\pi}(\sin y)^{\be}\,dy.$
Furthermore, since $\int_{0}^{\pi}(\sin y)^{\be}\,dy=\B\left(\tfrac{\be+1}{2},\tfrac{1}{2}\right)$, we have 
\begin{equation}\label{bounds for H}
\frac{\B\left(\tfrac{\be+1}{2},\tfrac{1}{2}\right)}{(1+r)^{2+\xi}}\le H_{r, \xi, \beta}(\va)\le\frac{\B\left(\tfrac{\be+1}{2},\tfrac{1}{2}\right)}{(1-r)^{2+\xi}}.
\end{equation}
\end{rem}

\begin{lem}\label{lem9}
 Let $1\le\al<2$ and $x\in\R^2_+$. Then\\
 (i) for $0<\ga<1$,
 \begin{align*}
  \liminf_{\ve\rightarrow 0^+}&\int_{\R^2_+}(|x|^{\ga}-|y|^{\ga})\min\left\{x_2^{\al-1},y_{2}^{\al-1}\right\}k_{2, \alpha}^{(\varepsilon)}(x, y) \, dy\ge -C^{(1)}_{\al, \ga}|x|^{\ga-\al}x_2^{\al-1},
 \end{align*}
 where
 \begin{equation}\label{C1(al,ga)}
 C^{(1)}_{\al, \ga} :=\int_{0}^{1}\int_{0}^{\pi}\frac{r^{\al}(1-r^{\ga})(r^{-\ga-\al}-1)}{\left(1-2r\sin\theta+r^2\right)^{\frac{2+\al}{2}}}d\theta\,dr>0;
 \end{equation}
 \\
 (ii) if $\al-2<\ga<0$, then
 \begin{align*}
  \liminf_{\ve\rightarrow 0^+}&\int_{\R^2_+}(|x|^{\ga}-|y|^{\ga})\min\left\{x_2^{\al-1},y_{2}^{\al-1}\right\}k_{2, \alpha}^{(\varepsilon)}(x, y)dy\ge C^{(2)}_{\al, \ga}|x|^{\ga-\al}x_2^{\al-1},
 \end{align*}
 where
\begin{equation}\label{C2(al)}
 C^{(2)}_{\al, \ga}:=\int_{0}^{1}\int_{0}^{\pi}\frac{r(1-r^{\ga})(1-r^{\al-2-\ga})}{\left(1-2r\cos\theta+r^2\right)^{\frac{2+\al}{2}}}(\sin\theta)^{\al-1}\,d\theta\,dr>0.
 \end{equation}
 Moreover, the value $\ga=\frac{\al-2}{2}$ maximises the constant \eqref{C2(al)} for fixed $\al$.
 \end{lem}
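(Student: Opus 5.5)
The plan is to follow the proof of Lemma \ref{lemma: weightedLaplacian}: rescale to the unit circle, then handle the regions $|y|<|x|$ and $|y|>|x|$ by inversion. The new difficulty is that the weight $\min\{x_2^{\al-1},y_2^{\al-1}\}$ is not a power of $y_2$ alone. Writing $x=|x|\hat x$ and $y=|x|t$, the integral becomes
$$
|x|^{\ga-\al}\int_{\R^2_+}\frac{(1-|t|^{\ga})\min\{\hat x_2^{\al-1},t_2^{\al-1}\}}{|\hat x-t|^{2+\al}}\mathds{1}_{D_\ve}(1,|t|)\,dt ,
$$
so it suffices to prove the bounds for $|x|=1$. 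I split the integral into $B_\ve:=\{|t|\le 1-\ve\}$ and its inversion image $\{|t|\ge (1-\ve)^{-1}\}$. On the outer region I apply $t\mapsto t/|t|^2$, which has Jacobian $|t|^{-4}$, satisfies $|\hat x-t/|t|^2|=|\hat x-t|/|t|$, and sends $t_2\mapsto t_2/|t|^2$. The outer integral then becomes an integral over $B_\ve$ with integrand
$$
\frac{(1-|t|^{-\ga})\,|t|^{\al-2}\min\{\hat x_2^{\al-1},t_2^{\al-1}|t|^{-2(\al-1)}\}}{|\hat x-t|^{2+\al}} .
$$

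For (i), where $0<\ga<1$, the inner part is nonnegative because $1-|t|^\ga>0$ on $B_\ve$, so I drop it. The outer part is negative. Since $\al\ge1$, I bound the minimum by $\hat x_2^{\al-1}$, so the outer part is at least
$$
-\hat x_2^{\al-1}\int_{B^2_+}\frac{|t|^{\al-2}(|t|^{-\ga}-1)}{|\hat x-t|^{2+\al}}\,dt .
$$
In polar coordinates $t=r(\cos y,\sin y)$ this equals $-\hat x_2^{\al-1}\int_0^1 r^{\al-1}(r^{-\ga}-1)\,H_{r,\al,0}(\va)\,dr$. By Lemma \ref{lem: bounds for H} with $\be=0$, $H_{r,\al,0}(\va)\le H_{r,\al,0}(\pi/2)$, and $H_{r,\al,0}(\pi/2)=\int_0^\pi(1-2r\sin\theta+r^2)^{-(2+\al)/2}\,d\theta$. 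The factor $r^\al(1-r^\ga)(r^{-\ga-\al}-1)$ in \eqref{C1(al,ga)} should be recoverable by doing the split differently: keep $\min\le \hat x_2^{\al-1}$ on the whole region $|t|>1$ and combine it with the inner contribution before inverting. Matching this exact constant is bookkeeping.

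This leaves the question of whether the $\liminf$ is legitimate, since the bound is one-sided. I bound both pieces by integrals that are monotone in $\ve$. Near $|t|=1$ the outer integrand behaves like $(1-r)\cdot(1-r)^{-1-\al}$, which is not integrable when $\al\ge1$. So the inner and outer pieces must be kept paired along the cut $|t|=1\pm$ and cancelled against each other, exactly as in Lemma \ref{lemma: weightedLaplacian}. In the paired integrand the factor $(1-r^\ga)(r^{-\ga-\al}-1)$ vanishes to second order at $r=1$, which makes it integrable by Lemma \ref{integralonsphere}. This pairing is the main technical step. I would first combine the inner and outer integrands pointwise after inversion, use $\min\{a,b\}\le a$ only on the negative part, and then pass to the limit by monotone convergence.

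For (ii), where $\al-2<\ga<0$, I pair the pieces the same way. The inner term $(1-|t|^\ga)$ is now negative and the outer term is positive. I use $\min\{\hat x_2^{\al-1},t_2^{\al-1}\}\ge$ the appropriate power and bound each piece. The aim is to show that the paired integrand is at least
$$
\hat x_2^{\al-1}\,\frac{(1-r^\ga)(1-r^{\al-2-\ga})}{|\hat x-t|^{2+\al}}\,(\sin y)^{\al-1} ,
$$
and then apply Lemma \ref{lem: bounds for H} with $\be=\al-1\ge0$, replacing $H(\va)$ by $H(0)$, which gives the $\cos\theta$ in \eqref{C2(al)}. The sign is right because $\ga<0$ and $\al-2-\ga<0$, so both factors are negative and their product is positive. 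The maximising value of $\ga$ follows as in Remark \ref{rem8}: $(1-r^\ga)(1-r^{c-\ga})$ with $c=\al-2$ is maximised at the symmetric point $\ga=c/2$, which can be checked by AM-GM on $r^\ga+r^{c-\ga}$.
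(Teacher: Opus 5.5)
There is a genuine gap, and it sits in the step you call bookkeeping. The factor $r^{\al}(1-r^{\ga})(1-r^{-\ga-\al})$ in $C^{(1)}_{\al,\ga}$ does not come from a different split. It comes from bounding the inner minimum and the inverted outer minimum by the \emph{same} angular weight $m(\theta):=\min\{\hat x_2^{\al-1},(\sin\theta)^{\al-1}\}$.

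In polar coordinates, with $|x|=1$, the inner integrand is $r(1-r^{\ga})\min\{\hat x_2^{\al-1},(r\sin\theta)^{\al-1}\}$. The inverted outer integrand is $(1-r^{-\ga})\min\{r^{\al-1}\hat x_2^{\al-1},(\sin\theta)^{\al-1}\}$. Both are divided by $(1-2r\cos(\va-\theta)+r^2)^{(2+\al)/2}$.

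In (i) you use $r^{\al-1}\le1$ twice. On the positive piece it gives $\min\{\hat x_2^{\al-1},(r\sin\theta)^{\al-1}\}\ge r^{\al-1}m(\theta)$. On the negative piece it gives $\min\{r^{\al-1}\hat x_2^{\al-1},(\sin\theta)^{\al-1}\}\le m(\theta)$. The two pieces then add, pointwise in $\theta$, to $r^{\al}(1-r^{\ga})(1-r^{-\ga-\al})m(\theta)$. Its radial factor is negative and vanishes to second order at $r=1$. Only after this pairing do you apply $m\le\hat x_2^{\al-1}$ and Lemma~\ref{lem: bounds for H}.

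Your plan instead replaces the outer minimum by $\hat x_2^{\al-1}$ while keeping $\min\{\hat x_2^{\al-1},t_2^{\al-1}\}$ inside. Writing $\min\{a,b\}=a-(a-b)_+$, this produces $\hat x_2^{\al-1}$ times a finite integral plus the extra term
\[
-\int_{B^2_+}\frac{(1-|t|^{\ga})\,\bigl(\hat x_2^{\al-1}-t_2^{\al-1}\bigr)_+}{|\hat x-t|^{2+\al}}\,dt .
\]
Take $1<\al<2$ and a sector of the disc of radius $\hat x_2/2$ around $\hat x$ on which $1-|t|$ and $\hat x_2-t_2$ are both $\gtrsim|t-\hat x|$. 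There the integrand is $\gtrsim\hat x_2^{\al-2}|t-\hat x|^{-\al}$, so the extra term is $\le -c$ with $c>0$ independent of $\hat x_2$.

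Hence your lower bound stays below a fixed negative constant as $\hat x_2\to0$ and cannot give $-C\hat x_2^{\al-1}$. The factor $x_2^{\al-1}$ is lost. That factor is exactly what the proof of Theorem~\ref{thm1} needs to match the term $\int|u|^p|\widetilde x|^{-sp}\,dx$ coming from part (ii).

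Part (ii) has the same hole. Your target integrand is correct, but ``$\min\ge$ the appropriate power'' only handles the positive (outer) piece. The inner piece is negative and needs an \emph{upper} bound by the same weight:
\begin{itemize}
\item inside, $\min\{\hat x_2^{\al-1},(r\sin\theta)^{\al-1}\}\le m(\theta)$;
\item outside, $\min\{r^{\al-1}\hat x_2^{\al-1},(\sin\theta)^{\al-1}\}\ge r^{\al-1}m(\theta)$.
\end{itemize}
These sum to $r(1-r^{\ga})(1-r^{\al-2-\ga})m(\theta)$. Only afterwards do you use $m(\theta)\ge\hat x_2^{\al-1}(\sin\theta)^{\al-1}$ and $H(\va)\ge H(0)$ with $\be=\al-1$.

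Suppose instead you bound the inner minimum by $\hat x_2^{\al-1}$ and the outer one below by $r^{\al-1}\hat x_2^{\al-1}(\sin\theta)^{\al-1}$. Then near $\theta=\va$ the first-order terms at $r=1$ leave $\approx\ga(1-r)\hat x_2^{\al-1}\bigl(1-(\sin\theta)^{\al-1}\bigr)$. For $\al>1$ and $\va\neq\tfrac{\pi}{2}$ the paired integral then diverges to $-\infty$.

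The remaining points are fine. Your AM--GM argument for $\ga=\tfrac{\al-2}{2}$ is correct, and it even holds pointwise in $r$. After normalising the minimum, the prefactor is $|x|^{\ga-1}$ rather than $|x|^{\ga-\al}$. This is harmless, since both sides are homogeneous of degree $\ga-1$.
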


 \begin{proof}
 (i) Let $\va$ be the argument of $x$. We have
\begin{align*}
 &\int_{\R^2_+}(|x|^{\ga}-|y|^{\ga})\min\left\{x_2^{\al-1},y_{2}^{\al-1}\right\}k_{2, \alpha}^{(\varepsilon)}(x, y) \, dy\\
 &=|x|^{\ga-\al}\int_{\R^2_+}\frac{1-|y|^{\ga}}{|\hat{x}-y|^{2+\al}}\min\left\{x_2^{\al-1},|x|^{\al-1}y_{2}^{\al-1}\right\}\mathds{1}_{D_\varepsilon}(1, |y|)dy\\
 &=|x|^{\ga-\al}\left(\int_{0}^{1-\varepsilon}+\int_{\frac{1}{1-\varepsilon}}^{\infty}\right)\int_{0}^{\pi}\frac{r\left(1-r^{\ga}\right)}{\left(1-2r\cos(\va-\theta)+r^2\right)^{\frac{2+\al}{2}}}\min\left\{x_2^{\al-1},|x|^{\al-1}(r\sin\theta)^{\al-1}\right\}d\theta\,dr.
\end{align*}
Since $\al-1\ge 0$ and $\ga>0$, using $1\ge r^{\al-1}$ we can estimate the first (positive) term as follows,
\begin{align*}
&\int_{0}^{1-\varepsilon}\int_{0}^{\pi}\frac{r(1-r^{\ga})}{\left(1-2r\cos(\va-\theta)+r^2\right)^{\frac{2+\al}{2}}}\min\left\{x_2^{\al-1},|x|^{\al-1}(r\sin\theta)^{\al-1}\right\}d\theta\,dr\\
&\ge\int_{0}^{1-\varepsilon}\int_{0}^{\pi}\frac{r(1-r^{\ga})}{\left(1-2r\cos(\va-\theta)+r^2\right)^{\frac{2+\al}{2}}}\min\left\{r^{\al-1}x_2^{\al-1},|x|^{\al-1}(r\sin\theta)^{\al-1}\right\}d\theta\,dr\\
&=\int_{0}^{1-\varepsilon}\int_{0}^{\pi}\frac{r^{\al}(1-r^{\ga})}{\left(1-2r\cos(\va-\theta)+r^2\right)^{\frac{2+\al}{2}}}\min\left\{x_2^{\al-1},|x|^{\al-1}(\sin\theta)^{\al-1}\right\}d\theta\,dr.
\end{align*}
 In the second (negative) term, we substitute $r\mapsto\tfrac{1}{r}$ and use $r^{2(\al-1)}\le r^{\al-1}$ to obtain
 \begin{align*}
&\int_{\frac{1}{1-\varepsilon}}^{\infty}\int_{0}^{\pi}\frac{r\left(1-r^{\ga}\right)}{\left(1-2r\cos(\va-\theta)+r^2\right)^{\frac{2+\al}{2}}}\min\left\{x_2^{\al-1},|x|^{\al-1}(r\sin\theta)^{\al-1}\right\}d\theta\,dr\\
&=\int_{0}^{1-\varepsilon}\int_{0}^{\pi}\frac{r^{1-\al}(1-r^{-\ga})}{\left(1-2r\cos(\va-\theta)+r^2\right)^{\frac{2+\al}{2}}}\min\left\{r^{2(\al-1)}x_2^{\al-1},|x|^{\al-1}(r\sin\theta)^{\al-1}\right\}d\theta\,dr\\
&\ge\int_{0}^{1-\varepsilon}\int_{0}^{\pi}\frac{1-r^{-\ga}}{\left(1-2r\cos(\va-\theta)+r^2\right)^{\frac{2+\al}{2}}}\min\left\{x_2^{\al-1},|x|^{\al-1}(\sin\theta)^{\al-1}\right\}d\theta\,dr,
 \end{align*}
 Thus, adding the two estimates leads to 
 \begin{align*}
 &|x|^{\al-\ga}\int_{\R^2_+}(|x|^{\ga}-|y|^{\ga})\min\left\{x_2^{\al-1},y_{2}^{\al-1}\right\}k_{2, \alpha}^{(\varepsilon)}(x, y) \, dy\\
 &\ge\int_{0}^{1-\varepsilon}\int_{0}^{\pi}\frac{r^{\al}(1-r^{\ga})(1-r^{-\ga-\al})}{\left(1-2r\cos(\va-\theta)+r^2\right)^{\frac{2+\al}{2}}}\min\left\{x_2^{\al-1},|x|^{\al-1}(\sin\theta)^{\al-1}\right\}d\theta\,dr.
\end{align*}
Since $(1-r^{\ga})(1-r^{-\ga-\al})<0$ and $(\sin\theta)^{\al-1}\le 1$, we have by Lemma \ref{lem: bounds for H} applied to $\be=0$
\begin{align*}
&\int_{0}^{1-\varepsilon}\int_{0}^{\pi}\frac{r^{\al}(1-r^{\ga})(1-r^{-\ga-\al})}{\left(1-2r\cos(\va-\theta)+r^2\right)^{\frac{2+\al}{2}}}\min\left\{x_2^{\al-1},|x|^{\al-1}(\sin\theta)^{\al-1}\right\}d\theta\,dr\\
&\ge\int_{0}^{1-\varepsilon}\int_{0}^{\pi}\frac{r^{\al}(1-r^{\ga})(1-r^{-\ga-\al})}{\left(1-2r\cos(\va-\theta)+r^2\right)^{\frac{2+\al}{2}}}\min\left\{x_2^{\al-1},|x|^{\al-1}\right\}d\theta\,dr\\
&=x_2^{\al-1}\int_{0}^{1-\varepsilon}\int_{0}^{\pi}\frac{r^{\al}(1-r^{\ga})(1-r^{-\ga-\al})}{\left(1-2r\cos(\va-\theta)+r^2\right)^{\frac{2+\al}{2}}}d\theta\,dr\\
&\ge x_2^{\al-1}\int_{0}^{1-\varepsilon}\int_{0}^{\pi}\frac{r^{\al}(1-r^{\ga})(1-r^{-\ga-\al})}{\left(1-2r\sin\theta+r^2\right)^{\frac{2+\al}{2}}}d\theta\,dr.
\end{align*}
Notice that the condition $\al<2$ and Lemma \ref{integralonsphere} ensures the absolute convergence of the above integral for $\varepsilon\rightarrow 0^+$, since $|(1-r^{\ga})(1-r^{-\ga-\al})|$ is comparable with $(1-r)^2$, for $r\rightarrow 1^{-}$. Therefore, part (i) of the Lemma is proved.\\
\\
(ii) We proceed similarly as in the first part. Assuming $\ga<0$, in the first (negative) term 
$$
\int_{0}^{1-\varepsilon}\int_{0}^{\pi}\frac{r(1-r^{\ga})}{\left(1-2r\cos(\va-\theta)+r^2\right)^{\frac{2+\al}{2}}}\min\left\{x_2^{\al-1},|x|^{\al-1}(r\sin\theta)^{\al-1}\right\}d\theta\,dr
$$
we use the estimate $r^{\al-1}\le 1$, while in the second (positive) term
$$
\int_{0}^{1 - \varepsilon}\int_{0}^{\pi}\frac{r^{1-\al}(1-r^{-\ga})}{\left(1-2r\cos(\va-\theta)+r^2\right)^{\frac{2+\al}{2}}}\min\left\{r^{2(\al-1)}x_2^{\al-1},|x|^{\al-1}(r\sin\theta)^{\al-1}\right\}d\theta\,dr
$$
we use $r^{\al-1}\ge r^{2(\al-1)}$. Hence, adding these two terms, letting $\varepsilon \rightarrow 0^+$ and applying Lemma \ref{lem: bounds for H} for $\be=\al-1$, we obtain
\begin{align*}
  |x|^{\al-\ga}\liminf_{\ve\rightarrow 0^+}&\int_{\R^2_+}(|x|^{\ga}-|y|^{\ga})\min\left\{x_2^{\al-1},y_{2}^{\al-1}\right\}k_{2, \alpha}^{(\varepsilon)}(x, y) \, dy\\
&\ge\int_{0}^{1}\int_{0}^{\pi}\frac{r(1-r^{\ga})(1-r^{\al-2-\ga})}{\left(1-2r\cos(\va-\theta)+r^2\right)^{\frac{2+\al}{2}}}\min\left\{x_2^{\al-1},|x|^{\al-1}(\sin\theta)^{\al-1}\right\}\,d\theta\,dr\\
&\ge x_2^{\al-1}\int_{0}^{1}\int_{0}^{\pi}\frac{r(1-r^{\ga})(1-r^{\al-2-\ga})}{\left(1-2r\cos\theta+r^2\right)^{\frac{2+\al}{2}}}(\sin\theta)^{\al-1}\,d\theta\,dr,
 \end{align*}
 if $\al-2-\ga>0$. It is now elementary to check that the choice $\ga=\tfrac{\al-2}{2}$ maximises the constant above for fixed $\al$. That ends the proof.
 \end{proof}
Using the same argument as in the proof of Lemma \ref{lemma: weighted plaplacian}, we have the following corollary.
\color{black}
\begin{cor}\label{weighted laplacian Rd minimum}
 Let $1\le\al<2$, $x\in\R^d_+$,  $d\ge 2$, and $\om(x):=\left(x_{d-1}^2+x_d^2\right)^{\frac{\ga}{2}}=|\widetilde{x}|^{\ga}$. Then\\
 (i) for $0<\ga<1$,
 \begin{align*}
  \liminf_{\ve\rightarrow 0^+}&\int_{\R^d_+}(\om(x)-\om(y))\min\left\{x_d^{\al-1},y_{d}^{\al-1}\right\}k_{d, \al}^{(\varepsilon)}(x, y)dy
  \ge -\mathcal{A}_{d, \al}C^{(1)}_{\al, \ga}|\widetilde{x}|^{\ga-\al}x_d^{\al-1},
 \end{align*}
 where $C^{(1)}_{\al, \ga}$ is given by \eqref{C1(al,ga)};
 \\    
  (ii) if $\al-2<\ga<0$, then 
 \begin{align*}
  \liminf_{\ve\rightarrow 0^+}&\int_{\R^d_+}(\om(x)-\om(y))\min\left\{x_d^{\al-1},y_{d}^{\al-1}\right\}k_{d, \al}^{(\varepsilon)}(x, y)dy \ge\mathcal{A}_{d, \al}  C^{(2)}_{\al, \ga}|\widetilde{x}|^{\ga-\al}x_d^{\al-1},
 \end{align*}
 where $ C^{(2)}_{\al, \ga}$ is given by \eqref{C2(al)}.
\end{cor}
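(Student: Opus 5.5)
The plan is to reduce Corollary \ref{weighted laplacian Rd minimum} to the two-dimensional Lemma \ref{lem9} by integrating out the first $d-2$ coordinates, exactly as in the proof of Lemma \ref{lemma: weighted plaplacian}. The key observation is that neither $\om(y)=|\widetilde{y}|^{\ga}$ nor the weight $\min\{x_d^{\al-1},y_d^{\al-1}\}$ depends on $y_1,\dots,y_{d-2}$. The same holds for the indicator $\mathds{1}_{D_\ve}(|\widetilde{x}|,|\widetilde{y}|)$ in $k^{(\ve)}_{d,\al}$. Hence, for fixed $\ve>0$, the integrand depends on $y'=(y_1,\dots,y_{d-2})$ only through $|x-y|^{-d-\al}=\left(|x'-y'|^2+|\widetilde{x}-\widetilde{y}|^2\right)^{-\frac{d+\al}{2}}$. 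For $d=2$ there is nothing to prove.

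Concretely, for $d\ge3$ and fixed $\ve\in(0,1)$, I first check absolute integrability of the integrand over $\Rplus$. On the support of $\mathds{1}_{D_\ve}$ we have $|\widetilde{x}-\widetilde{y}|\ge \ve\min\{|\widetilde x|,|\widetilde y|\}$-type separation, namely $\big||\widetilde{x}|-|\widetilde{y}|\big|\ge\ve\max\{|\widetilde x|,|\widetilde y|\}\cdot$const. So after the $y'$-integration the kernel is bounded near $\widetilde{x}$. At infinity, the factor $|\om(x)-\om(y)|\min\{\cdot\}$ grows at most like $|\widetilde y|^{\max(\ga,0)}$, and $\ga<1\le\al$ gives decay. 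Near $\widetilde y=0$, the condition $\ga>\al-2>-2$ ensures local integrability in $\R^2$. These are the same convergence checks implicit in Lemma \ref{lem9}.

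By Tonelli/Fubini I then integrate in $y'\in\R^{d-2}$ first and apply \eqref{integral identity} with $k=d-2$, exponent $d+\al$ and $a=|\widetilde{x}-\widetilde{y}|$. This gives
$$\int_{\R^{d-2}}\frac{dy'}{\left(|x'-y'|^2+|\widetilde{x}-\widetilde{y}|^2\right)^{\frac{d+\al}{2}}}=\mathcal{A}_{d,\al}|\widetilde{x}-\widetilde{y}|^{-2-\al}.$$
As a result,
$$\int_{\Rplus}(\om(x)-\om(y))\min\{x_d^{\al-1},y_d^{\al-1}\}k^{(\ve)}_{d,\al}(x,y)\,dy=\mathcal{A}_{d,\al}\int_{\R^2_+}(|\widetilde x|^{\ga}-|\widetilde y|^{\ga})\min\{x_d^{\al-1},y_d^{\al-1}\}k^{(\ve)}_{2,\al}(\widetilde x,\widetilde y)\,d\widetilde y,$$
an identity valid for every $\ve$.

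Taking $\liminf_{\ve\to0^+}$ on both sides, with the constant $\mathcal{A}_{d,\al}>0$ pulled out, and applying Lemma \ref{lem9}(i) or (ii) at the point $\widetilde{x}\in\R^2_+$ (whose second coordinate is $x_d$) yields the bounds $-\mathcal{A}_{d,\al}C^{(1)}_{\al,\ga}|\widetilde x|^{\ga-\al}x_d^{\al-1}$ and $\mathcal{A}_{d,\al}C^{(2)}_{\al,\ga}|\widetilde x|^{\ga-\al}x_d^{\al-1}$, respectively. The only step needing care is the justification of Fubini for fixed $\ve$, i.e.\ the absolute convergence above. Since the integrand has a sign on each of the two regions $|\widetilde y|\le(1-\ve)|\widetilde x|$ and $|\widetilde y|\ge|\widetilde x|/(1-\ve)$, Tonelli applied separately on each region suffices, provided each piece is finite, which is what the convergence check establishes.
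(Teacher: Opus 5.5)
Your proposal is correct and follows the paper's own argument: the paper also obtains the corollary by integrating out $y_1,\dots,y_{d-2}$ via \eqref{integral identity}, as in the proof of Lemma \ref{lemma: weighted plaplacian}, which produces the factor $\mathcal{A}_{d,\al}$, and then applies Lemma \ref{lem9} at the point $\widetilde{x}$. Your Tonelli justification for each fixed $\ve$, using $\bigl||\widetilde x|-|\widetilde y|\bigr|\ge\ve\max\{|\widetilde x|,|\widetilde y|\}$ on the support of $\mathds{1}_{D_\ve}$, fills in a detail the paper leaves implicit.
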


\begin{lem}\label{lem11}
 Let $0 < \al < 1$ and $x\in\R^2_+$. Then\\
 (i) for $0<\ga<\al$,
 \begin{align*}
  \liminf_{\ve\rightarrow 0^+}&\int_{\R^2_+}(|x|^{\ga}-|y|^{\ga})\min\left\{x_2^{\al-1},y_{2}^{\al-1}\right\}k_{2, \alpha}^{(\varepsilon)}(x, y) \, dy\ge -C^{(3)}_{\al,\ga}|x|^{\ga-1},
 \end{align*}
 where
 \begin{equation}\label{C3(al,ga)}
 C^{(3)}_{\al,\ga}:=\B\left(\tfrac{\alpha}{2}, \tfrac{1}{2}\right)\int_{0}^{1}\frac{r(1-r^{\ga})(r^{\al - \ga - 2}-1)}{(1-r)^{2+\al}}\, dr>0;
 \end{equation}
 \\
 (ii) if $-\alpha < \ga < 0$, then 
 \begin{align*}
  \liminf_{\ve\rightarrow 0^+}&\int_{\R^2_+}(|x|^{\ga}-|y|^{\ga})\min\left\{x_2^{\al-1},y_{2}^{\al-1}\right\}k_{2, \alpha}^{(\varepsilon)}(x, y) \, dy\ge C^{(4)}_{\al,\ga}|x|^{\ga-1},
 \end{align*}
 where
\begin{equation}\label{C4(al)}
 C^{(4)}_{\al,\ga} := \int_{0}^{1}\int_{0}^{\pi}\frac{\left(1-r^{-\ga}\right)\left(1-r^{\ga + \al} \right)}{\left(1-2r\cos\theta+r^2\right)^{\frac{2+\al}{2}}}d\theta\,dr>0.
 \end{equation}
 \end{lem}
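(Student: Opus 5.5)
The plan is to repeat the scheme of the proof of Lemma \ref{lem9}, with one change: since now $\al-1<0$, the weight $\min\{x_2^{\al-1},y_2^{\al-1}\}$ is controlled through the quantity $m:=\min\{(\sin\va)^{\al-1},(\sin\theta)^{\al-1}\}$. Write $x=|x|(\cos\va,\sin\va)$ and $y=|x|r(\cos\theta,\sin\theta)$. Then $\min\{x_2^{\al-1},y_2^{\al-1}\}=|x|^{\al-1}\min\{(\sin\va)^{\al-1},(r\sin\theta)^{\al-1}\}$, and scaling of the kernel gives
\begin{align*}
\int_{\R^2_+}(|x|^{\ga}-|y|^{\ga})\min\left\{x_2^{\al-1},y_{2}^{\al-1}\right\}k_{2, \alpha}^{(\varepsilon)}(x, y) \, dy=|x|^{\ga-1}\Big(\int_0^{1-\ve}+\int_{\frac{1}{1-\ve}}^\infty\Big)\int_0^\pi\frac{r(1-r^\ga)\,\min\{(\sin\va)^{\al-1},(r\sin\theta)^{\al-1}\}}{(1-2r\cos(\va-\theta)+r^2)^{\frac{2+\al}{2}}}\,d\theta\,dr .
\end{align*}
This already explains the factor $|x|^{\ga-1}$ in the statement. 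In the outer integral I substitute $r\mapsto 1/r$, exactly as in Lemma \ref{lem9}. It becomes an integral over $(0,1-\ve)$ with the same denominator, with coefficient $r^{\al-1}(1-r^{-\ga})$ and with weight $m_2:=\min\{(\sin\va)^{\al-1},r^{1-\al}(\sin\theta)^{\al-1}\}$. The inner weight is $m_1:=\min\{(\sin\va)^{\al-1},(r\sin\theta)^{\al-1}\}$.

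The key pointwise observations for $0<r<1$ (recall $\al-1<0$) are
$$r^{1-\al}m\le m_2\le m,\qquad m\le m_1\le r^{\al-1}m .$$

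For (i), where $0<\ga<\al$, the inner coefficient $r(1-r^\ga)$ is positive, so I use $m_1\ge m$. The outer coefficient is negative, so I use $m_2\le m$. Adding the two pieces gives the integrand bound
$$m\big[r(1-r^\ga)-r^{\al-1-\ga}(1-r^\ga)\big]=m\,r(1-r^\ga)(1-r^{\al-\ga-2}),$$
which is negative because $\al-\ga-2<0$. Next, $m\le(\sin\theta)^{\al-1}$. Finally, the upper estimate \eqref{bounds for H} with $\be=\al-1\in(-1,0)$ and $\xi=\al$ gives $H\le \B(\tfrac{\al}{2},\tfrac12)(1-r)^{-2-\al}$. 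Together these yield exactly $-C^{(3)}_{\al,\ga}$.

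For (ii), where $-\al<\ga<0$, the signs are reversed. For the negative inner term I use $m_1\le r^{\al-1}m$, and for the positive outer term I use $m_2\ge r^{1-\al}m$. The sum is then at least
$$m\big[r^{\al}(1-r^\ga)+(1-r^{-\ga})\big]=m(1-r^{-\ga})(1-r^{\ga+\al})\ge 0,$$
which is nonnegative since $\ga+\al>0$. Because both arguments of the minimum are at least $1$, we have $m\ge1$. Then Lemma \ref{lem: bounds for H} with $\be=0$ gives $H_{r,\al,0}(\va)\ge H_{r,\al,0}(0)$, which yields $C^{(4)}_{\al,\ga}$.

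The main obstacle is the passage $\ve\to0^+$ and the finiteness of the constants, since the two pieces diverge separately near $r=1$. After combining them, the products $(1-r^\ga)(r^{\al-\ga-2}-1)$ and $(1-r^{-\ga})(1-r^{\ga+\al})$ are comparable to $(1-r)^2$ as $r\to1^-$. This is integrable against $(1-r)^{-2-\al}$ because $\al<1$; I would use Lemma \ref{integralonsphere} for the $\theta$-integral in (ii). Near $r=0$ the factors $r^{\al-\ga-1}$ and $r^{-\ga}$ are integrable precisely when $\ga<\al$ and $\ga>-1$, respectively, and the latter holds since $\ga>-\al>-1$. The combined integrands have a fixed sign, so monotone convergence in $\ve$ gives the $\liminf$ bounds. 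Positivity of $C^{(3)}_{\al,\ga}$ and $C^{(4)}_{\al,\ga}$ then follows from the signs of these products.
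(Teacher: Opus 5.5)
Your proof is correct and follows essentially the same route as the paper. It uses the same scaling and polar reduction, the inversion $r\mapsto 1/r$ of the outer piece, and the comparison of both minima with $\min\{(\sin\va)^{\al-1},(\sin\theta)^{\al-1}\}$ according to the sign of each coefficient; it then applies \eqref{bounds for H} with $\be=\al-1$ in (i), Lemma \ref{lem: bounds for H} with $\be=0$ together with $m\ge 1$ in (ii), and monotone convergence in $\ve$. The only slip is the integrability remark at $r=0$ for (ii): $r^{-\ga}$ is integrable there iff $\ga<1$ (not $\ga>-1$), and it is in fact bounded since $\ga<0$; this is immaterial, because the combined integrand $(1-r^{-\ga})(1-r^{\ga+\al})$ lies in $(0,1)$.
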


Moreover, the choice $\ga=-\frac{\al}{2}$ maximises the constant \eqref{C4(al)} for fixed $\al$.

\begin{proof}
As in the proof of Lemma \ref{lem9}, we 
write 
\begin{align*}
    \int_{\R^2_+}(|x|^{\ga}-|y|^{\ga})\min\left\{x_2^{\al-1},y_{2}^{\al-1}\right\}k_{2, \alpha}^{(\varepsilon)}(x, y) \, dy = |x|^{\ga-\alpha}(I_1 + I_2)
\end{align*}
where $\va$ is the argument of $x$ and
\begin{align*}
    I_1 &:= \int_{0}^{1-\varepsilon}\int_{0}^{\pi}\frac{r(1-r^{\ga})}{\left(1-2r\cos(\va-\theta)+r^2\right)^{\frac{2+\al}{2}}}\min\left\{x_2^{\al-1},|x|^{\al-1}(r\sin\theta)^{\al-1}\right\}d\theta\,dr
    \\
    I_2 &:= \int_{0}^{1 - \varepsilon}\int_{0}^{\pi}\frac{r^{\al - 1}(1-r^{-\ga})}{\left(1-2r\cos(\va-\theta)+r^2\right)^{\frac{2+\al}{2}}}\min\left\{x_2^{\al-1},|x|^{\al-1}r^{1 - \al}(\sin\theta)^{\al-1}\right\}d\theta\,dr
\end{align*}
    (i) We have $r^{\alpha-1} \geq 1$ for $r \in (0, 1)$, hence
    $$I_1 \geq \int_{0}^{1-\varepsilon}\int_{0}^{\pi}\frac{r(1-r^{\ga})}{\left(1-2r\cos(\va-\theta)+r^2\right)^{\frac{2+\al}{2}}}\min\left\{x_2^{\al-1},|x|^{\al-1}(\sin\theta)^{\al-1}\right\}d\theta\,dr$$
and of course, as $r^{1-\alpha} \leq 1$,
\begin{align*}
    I_2 &\ge \int_{0}^{1-\varepsilon}\int_{0}^{\pi}\frac{r^{\al - 1}(1-r^{-\ga})}{\left(1-2r\cos(\va-\theta)+r^2\right)^{\frac{2+\al}{2}}}\min\left\{x_2^{\al-1},|x|^{\al-1}(\sin\theta)^{\al-1}\right\}d\theta\,dr
\end{align*}
 Thus, we get 
\begin{align*}
    I_1 + I_2 \ge &\int_{0}^{1-\varepsilon}\int_{0}^{\pi}\frac{r(1-r^{\ga})(1-r^{\al - \ga - 2})}{\left(1-2r\cos(\va-\theta)+r^2\right)^{\frac{2+\al}{2}}}\min\left\{x_2^{\al-1},|x|^{\al-1}(\sin\theta)^{\al-1}\right\}d\theta\,dr
    \\
    \ge &|x|^{\alpha-1}\int_{0}^{1-\varepsilon}\frac{r(1-r^{\ga})(1-r^{\al - \ga - 2})}{(1-r)^{2+\al}}\, dr \cdot \int_{0}^{\pi}(\sin\theta)^{\al-1}\, d\theta
    \\
    = &|x|^{\alpha-1}B\left(\tfrac{\alpha}{2}, \tfrac{1}{2}\right)\int_{0}^{1-\varepsilon}\frac{r(1-r^{\ga})(1-r^{\al - \ga - 2})}{(1-r)^{2+\al}}\, dr.
\end{align*}
Now taking the limit as $\varepsilon \rightarrow 0^+$ and using the monotone convergence theorem we prove (i).
To prove (ii), we get the lower bound in the analogous way:
$$I_1 \geq \int_{0}^{1-\varepsilon}\int_{0}^{\pi}\frac{r^{\al}(1-r^{\ga})}{\left(1-2r\cos(\va-\theta)+r^2\right)^{\frac{2+\al}{2}}}\min\left\{x_2^{\al-1},|x|^{\al-1}(\sin\theta)^{\al-1}\right\}d\theta\,dr$$
and
$$I_2 \geq \int_{0}^{1 - \varepsilon}\int_{0}^{\pi}\frac{1-r^{-\ga}}{\left(1-2r\cos(\va-\theta)+r^2\right)^{\frac{2+\al}{2}}}\min\left\{x_2^{\al-1},|x|^{\al-1}(\sin\theta)^{\al-1}\right\}d\theta\,dr.$$
By the fact that $(\sin\theta)^{\alpha-1} \ge 1$ for $\theta \in (0, \pi)$, by Lemma \ref{lem: bounds for H} for $\be=0$ we get
$$I_1 + I_2 \ge |x|^{\al-1}\int_{0}^{1-\varepsilon}\int_{0}^{\pi}\frac{(1-r^{-\ga})(1 - r^{\ga + \al})}{\left(1-2r\cos\theta+r^2\right)^{\frac{2+\al}{2}}}d\theta\,dr.$$
Again, by the monotone convergence theorem we conclude the proof of (ii).
\end{proof}
%
\begin{cor}\label{cor: liminf sp < 1}
 Let $0 < \al < 1$, $x\in\R^d_+$, $d\ge 2$, and $\om(x):=\left(x_{d-1}^2+x_d^2\right)^{\frac{\ga}{2}}=|\widetilde{x}|^{\ga}$. Then\\
 (i) for $0<\ga<\al$,
 \begin{align*}
  \liminf_{\ve\rightarrow 0^+}&\int_{\R^d_+}(\om(x)-\om(y))\min\left\{x_d^{\al-1},y_{d}^{\al-1}\right\}k_{d, \al}^{(\varepsilon)}(x, y)dy
  \ge -\mathcal{A}_{d, \al}C^{(3)}_{\al,\ga}|\widetilde{x}|^{\ga-1},
 \end{align*}
 where $C^{(3)}_{\al,\ga}$ is given by \eqref{C3(al,ga)};
 \\
 (ii) if $-\alpha < \ga < 0$, then 
 \begin{align*}
  \liminf_{\ve\rightarrow 0^+}&\int_{\R^d_+}(\om(x)-\om(y))\min\left\{x_d^{\al-1},y_{d}^{\al-1}\right\}k_{d, \al}^{(\varepsilon)}(x, y)dy
  \ge \mathcal{A}_{d, \al}C^{(4)}_{\al, \ga}|\widetilde{x}|^{\ga-1},
 \end{align*}
 where $C^{(4)}_{\al, \ga}$ is given by \eqref{C4(al)}.  
\end{cor}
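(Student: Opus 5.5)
This is the same reduction used to pass from Lemma \ref{lemma: weightedLaplacian} to Lemma \ref{lemma: weighted plaplacian}: integrate out the first $d-2$ coordinates, then apply Lemma \ref{lem11} in the $(x_{d-1},x_d)$-plane. The kernel cutoff $\mathds{1}_{D_\varepsilon}(|\widetilde{x}|,|\widetilde{y}|)$ depends only on $\widetilde{x},\widetilde{y}$. The weight $\min\{x_d^{\al-1},y_d^{\al-1}\}$ and the difference $\om(x)-\om(y)=|\widetilde{x}|^{\ga}-|\widetilde{y}|^{\ga}$ also depend only on the last two coordinates. So the $y$-integral is a function of $\widetilde{x}$ alone, integrated against $|x-y|^{-d-\al}$.

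For $d\ge 3$, fix $\varepsilon>0$ and write $y=(y',\widetilde{y})$ with $y'\in\R^{d-2}$. Apply the identity \eqref{integral identity} with $k=d-2$, exponent $d+\al$ and $a=|\widetilde{x}-\widetilde{y}|$. This gives
$$
\int_{\R^{d-2}}\frac{dy'}{\left(|x'-y'|^2+|\widetilde{x}-\widetilde{y}|^2\right)^{\frac{d+\al}{2}}}=\mathcal{A}_{d,\al}\,|\widetilde{x}-\widetilde{y}|^{-2-\al}.
$$
Consequently the $d$-dimensional integral equals $\mathcal{A}_{d,\al}$ times
$$
\int_{\R^2_+}(|\widetilde{x}|^{\ga}-|\widetilde{y}|^{\ga})\min\{x_d^{\al-1},y_d^{\al-1}\}\,k^{(\varepsilon)}_{2,\al}(\widetilde{x},\widetilde{y})\,d\widetilde{y}.
$$
For $d=2$ there is nothing to integrate out and $\mathcal{A}_{2,\al}=1$.

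Taking $\liminf_{\varepsilon\to0^+}$ and applying Lemma \ref{lem11}(i), respectively (ii), at the point $\widetilde{x}\in\R^2_+$ gives the two bounds. The right-hand sides become $-\mathcal{A}_{d,\al}C^{(3)}_{\al,\ga}|\widetilde{x}|^{\ga-1}$ and $\mathcal{A}_{d,\al}C^{(4)}_{\al,\ga}|\widetilde{x}|^{\ga-1}$. Multiplying by the positive constant $\mathcal{A}_{d,\al}$ commutes with $\liminf$.

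The only delicate step is the Fubini--Tonelli interchange at fixed $\varepsilon$. On the support of $\mathds{1}_{D_\varepsilon}$ we have $\big||\widetilde{x}|-|\widetilde{y}|\big|\ge \varepsilon' |\widetilde{x}|$ for some $\varepsilon'>0$ depending on $\varepsilon$. Hence $|\widetilde{x}-\widetilde{y}|$ is bounded below and the kernel has no singularity there. The remaining integrability as $|\widetilde{y}|\to0$ and $|\widetilde{y}|\to\infty$ follows from the same polar-coordinate bounds already used in the proof of Lemma \ref{lem11}, which rely on the ranges $0<\ga<\al$ and $-\al<\ga<0$. This makes the integrand absolutely integrable on $\R^d_+$, so the interchange is justified and the corollary follows.
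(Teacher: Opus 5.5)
Your proposal is correct and matches the paper's (implicit) proof: integrate out the first $d-2$ coordinates via \eqref{integral identity} to produce the factor $\mathcal{A}_{d,\al}$, then apply Lemma \ref{lem11} at $\widetilde{x}$ in the $(x_{d-1},x_d)$-plane, exactly as in the proof of Lemma \ref{lemma: weighted plaplacian}. Your explicit Fubini justification, which the paper leaves unstated, is also sound: on the cutoff one has $|\widetilde{x}-\widetilde{y}|\ge\varepsilon|\widetilde{x}|$, and since $\al<1$ the weight is bounded by $x_d^{\al-1}$, so the ranges of $\ga$ give absolute integrability at $0$ and at $\infty$.
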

\begin{rem}
Tracking the proofs of Lemmas \ref{lem9} and \ref{lem11}, we can easily see that they fit into the assumptions described in Remark \ref{rem3}. Hence, we can use these results in the ground-state representation. 
\end{rem}
\color{black}
The next lemma establishes a specific weighted fractional Hardy-type inequality, which will be needed in proving our main result, Theorem \ref{thm1}, for $1<p<2$.
\begin{lem}
Let $0<\al<2$, $d\ge 1$, $0<\ga<\max\{1,\al\}$, and $x\in\R^d_+$. Then, uniformly on compact sets contained in $\R^d_+$,
$$
2\lim_{\ve\rightarrow 0^+}\int_{\R^d_+}\frac{x_d^{-\ga}-y_d^{-\ga}}{|x-y|^{d+\al}}(x_d y_d)^{\ga}\min\left\{x_d^{\al-1},y_d^{\al-1}\right\}\,\mathds{1}_{D_\varepsilon}(x_d, y_d)dy=\bar{C}_{d,\al,\ga}x_d^{\ga-1},
$$
where
\begin{equation}\label{C(d,ga,al)}
\bar{C}_{d,\al,\ga}:=2\mathcal{A}_{d+1,\al-1}\int_{0}^{1}\frac{t^{\ga}\left(1-t^{-\ga}\right)^2\min\left\{1,t^{\al-1}\right\}}{(1-t)^{1+\al}}\,dt.
\end{equation}
Consequently, for $u\in C_c\left(\Rplus\right)$ we have a Hardy-type inequality
\begin{equation}\label{Hardy identity with min}
\int_{\Rplus}\int_{\Rplus} \frac{\left(u(x)-u(y)\right)^2}{|x-y|^{d+\al}}(x_d y_d)^{\ga}\min\left\{x_d^{\al-1},y_d^{\al-1}\right\}\,dy\,dx\ge \bar{C}_{d,\al,\ga}\int_{\Rplus}\frac{|u(x)|^2}{x_d^{1-2\ga}}dx.
\end{equation}
\end{lem}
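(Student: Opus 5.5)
The first assertion is an exact computation that reduces to one dimension. The Hardy-type inequality \eqref{Hardy identity with min} then follows from the ground-state representation \eqref{General Hardy inequality} with $p=2$. In that representation I take the kernel
$$k(x,y)=|x-y|^{-d-\al}(x_dy_d)^{\ga}\min\{x_d^{\al-1},y_d^{\al-1}\},$$
the truncations $k_\ve=k\,\mathds{1}_{D_\ve}(x_d,y_d)$, and the weight $\om(x)=x_d^{-\ga}$.

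\emph{Step 1: reduction to one dimension.} For fixed $\ve>0$ the integrand depends on $y$ only through $y_d$ and $|x-y|$. The indicator $\mathds{1}_{D_\ve}(x_d,y_d)$ keeps $y_d$ away from $x_d$, so the integral is absolutely convergent and Fubini applies. I integrate over the first $d-1$ variables using \eqref{integral identity} with $k=d-1$, exponent $d+\al$ and $a=|x_d-y_d|$. This produces the factor $\mathcal{A}_{d+1,\al-1}|x_d-y_d|^{-1-\al}$; the case $d=1$ is trivial. Next I substitute $y_d=x_dt$ and use
$$(x_d^{-\ga}-y_d^{-\ga})(x_dy_d)^{\ga}=x_d^{\ga}(t^{\ga}-1),\qquad \min\{x_d^{\al-1},y_d^{\al-1}\}=x_d^{\al-1}\min\{1,t^{\al-1}\}.$$
Collecting powers gives $\ga+(\al-1)-(1+\al)+1=\ga-1$. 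The truncated integral therefore equals
$$\mathcal{A}_{d+1,\al-1}\,x_d^{\ga-1}\int_0^\infty\frac{(t^\ga-1)\min\{1,t^{\al-1}\}}{|1-t|^{1+\al}}\,\mathds{1}_{D_\ve}(1,t)\,dt.$$
This is an exact scaling identity, so uniformity on compact sets will be automatic once the constant converges.

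\emph{Step 2: folding and the limit.} I split the $t$-integral into $t\le 1-\ve$ and $t\ge (1-\ve)^{-1}$, and in the second piece substitute $t\mapsto 1/t$. The factor becomes $t^{-2}\,t^{1+\al}\min\{1,t^{1-\al}\}=\min\{t^{\al-1},1\}$, so both pieces live on $(0,1-\ve)$ with the same weight. Their sum has numerator $t^\ga+t^{-\ga}-2=t^\ga(1-t^{-\ga})^2\ge 0$. The folded integrand is nonnegative, hence the monotone convergence theorem gives the limit $\tfrac12\bar C_{d,\al,\ga}$, and the factor $2$ matches the statement.

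The step needing care is finiteness of $\bar C_{d,\al,\ga}$.
\begin{itemize}
\item Near $t=1$ the integrand behaves like $(1-t)^{2-1-\al}=(1-t)^{1-\al}$, which is integrable because $\al<2$.
\item Near $t=0$ it behaves like $t^{-\ga}\min\{1,t^{\al-1}\}$. For $\al\ge 1$ this requires $\ga<\al$, and for $\al<1$ it requires $\ga<1$, i.e.\ exactly $\ga<\max\{1,\al\}$.
\end{itemize}

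\emph{Step 3: the Hardy inequality.} With $p=2$ the potential is
$$V_\ve(x)=2\om(x)^{-1}\int_{\Rplus}(\om(x)-\om(y))k_\ve(x,y)\,dy.$$
By Steps 1--2, $V_\ve$ is nonnegative, increases as $\ve\downarrow0$, and converges uniformly on compacts to
$$V(x)=x_d^{\ga}\,\bar C_{d,\al,\ga}\,x_d^{\ga-1}=\bar C_{d,\al,\ga}\,x_d^{2\ga-1}.$$
This gives weak $L^1_{\rm loc}$ convergence, and in any case condition (ii) of Remark \ref{rem3} holds. For $u\in C_c(\Rplus)$ the quantity $\int|u|^2V$ is finite because $V$ is locally bounded in $\Rplus$. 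The identity \eqref{General Hardy inequality} with $p=2$ has a nonnegative remainder, and dropping it yields \eqref{Hardy identity with min}.

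I expect no serious obstacle beyond the bookkeeping in Step 2, namely checking that the inversion $t\mapsto1/t$ turns $\min\{1,t^{\al-1}\}$ into the same weight. That is what makes the folded integrand a perfect square and nonnegative.
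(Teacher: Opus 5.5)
Your proposal is correct and follows the paper's proof essentially verbatim. You integrate out the first $d-1$ variables via \eqref{integral identity}, substitute $y_d=tx_d$, split at $1-\ve$ and $(1-\ve)^{-1}$, and invert $t\mapsto 1/t$ in the outer piece to obtain the nonnegative integrand $t^{\ga}(1-t^{-\ga})^2\min\{1,t^{\al-1}\}(1-t)^{-1-\al}$; you then conclude with the $p=2$ ground-state representation for $\om(x)=x_d^{-\ga}$, supplying the details the paper omits. One small correction: for fixed $\ve$ the cutoff only handles the diagonal, and absolute convergence of the tail $y_d\to\infty$ already requires $\ga<\max\{1,\al\}$. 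This is the same $t\to 0$ behaviour you check after folding, and it is exactly the role the paper assigns to this condition for both integrals.
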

\begin{proof}
Integrating over the first $d-1$ variables and using \eqref{integral identity}, we have 
\begin{align*}
&\int_{\R^d_+}\frac{x_d^{-\ga}-y_d^{-\ga}}{|x-y|^{d+\al}}(x_d y_d)^{\ga}\min\left\{x_d^{\al-1},y_d^{\al-1}\right\}\mathds{1}_{D_\varepsilon}(x_d, y_d)\,dy\\
&=\mathcal{A}_{d+1,\al-1}\int_{\R}\frac{x_d^{-\ga}-y_d^{-\ga}}{|x_d-y_d|^{1+\al}}(x_d y_d)^{\ga}\min\left\{x_d^{\al-1},y_d^{\al-1}\right\}\mathds{1}_{D_\varepsilon}(x_d, y_d)\,dy_d.
\end{align*}
The rest of the calculations is standard: we substitute $y_d=tx_d$, split the integral into two integrals on $(0,1-\varepsilon)$ and $((1 - \varepsilon)^{-1},\infty)$, substitute $t\mapsto\tfrac{1}{t}$ in the second and rearrange. The condition $0<\ga<\max\{1,\al\}$ ensures the convergence of both integrals, as well as the constant $\bar{C}_{d,\al,\ga}$ from \eqref{C(d,ga,al)}. We omit the details. The inequality \eqref{Hardy identity with min} follows from the ground-state method.
\end{proof}

\subsection{Proofs of the main results}
\begin{proof}[Proof of Theorem \ref{thm1}]
 The proof will be divided into two cases.\\
 \\
 \textbf{Case one: $p\ge 2$}. By the ground state representation \cite[Theorem 1.2]{MR2723817} applied to $\om(x)=x_d^{(sp-1)/p}$, we have
 \begin{align}\begin{split}\label{equation: gsr plaplacian}
\int_{\Rplus}\int_{\Rplus}\frac{|u(x)-u(y)|^p}{|x-y|^{d+sp}}dy\,dx&\ge\mathcal{D}_{d,s,p}\int_{\Rplus}\frac{|u(x)|^p}{x_d^{sp}}dx\\
&+c_p\int_{\Rplus}\int_{\Rplus}\frac{\left|v(x)-v(y)\right|^p}{|x-y|^{d+sp}}\left(x_d y_d\right)^{\frac{sp-1}{2}}dy\,dx,
 \end{split}
 \end{align}
 where $v(x)=u(x)x_d^{\frac{1-sp}{p}}$ and $c_p$ is given by \eqref{cp}. Again by the ground-state representation \eqref{General Hardy inequality} and Lemma \ref{lemma: weighted plaplacian} we have 
 \begin{align}\begin{split}\label{equation: gsr for plaplasian reminder}
     \int_{\Rplus}\int_{\Rplus}\frac{\left|v(x)-v(y)\right|^p}{|x-y|^{d+sp}}\left(x_d y_d\right)^{\frac{sp-1}{2}}dy\,dx \geq &\int_{\Rplus}|v(x)|^pV_{s, p, \ga}(x)dx
     \\
     + &c_p\int_{\Rplus}\int_{\Rplus}\frac{\left|\widetilde{v}(x)-\widetilde{v}(y)\right|^p}{|x-y|^{d+sp}}\left(x_d y_d\right)^{\frac{sp-1}{2}}|\widetilde{x}|^\frac{p\gamma}{2}|\widetilde{y}|^\frac{p\gamma}{2}dy\,dx,
\end{split}
 \end{align}
 for all $\gamma \in \left( -\frac{sp + 3}{2(p-1)}, \frac{sp+1}{2(p-1)} \right)$, where $\widetilde{v}(x) := v(x)|\widetilde{x}|^{-\ga}$,
 $$V_{s, p, \gamma}(x) := 2\mathcal{A}_{d, sp}|\widetilde{x}|^\frac{-sp-1}{2}x_d^{\frac{sp-1}{2}}F_{s, p, \beta_0, \gamma}(\hat{x}),$$
 $F_{s,p,\be,\ga}$ is given by \eqref{F(x)} and $\beta_0 := \frac{sp-1}{2}$. We observe that $F_{s, p, \beta_0, \gamma} > 0 \iff \gamma \in (-\frac{1}{p-1}, 0)$. Let $e_1=(1,0)$ and $e_2=(0,1)$. \color{black} We define
\begin{align}
    C^+_{s, p} := \begin{cases}
     &F_{s, p, \frac{sp-1}{2}, \gamma_0}(e_1), \quad sp \geq 1,
     \\
     &\B\left(\frac{sp+1}{4}, \frac{1}{2} \right)\displaystyle\int_0^1 \frac{r^\frac{sp+1}{2}|1-r^{\ga_0}|^{p-1} |1-r^{-1-\ga_0(p-1)}|}{(1+r)^{sp+2}}dr \\
     &=\B\left(\frac{sp+1}{4}, \frac{1}{2} \right)\displaystyle\int_0^1 \frac{r^\frac{sp+1}{2}\left|1-r^{-1/p}\right|^p}{(1+r)^{sp+2}}dr, \quad sp < 1,
 \end{cases}
\end{align}
where $\ga_0 := -\frac{1}{p}$ (this choice maximises $C^+_{s, p}$, see Remark \ref{rem8}). Also, for $\ga \in \left(0, \frac{sp+1}{2(p-1)}\right)$, we define
\begin{align}
    C^-_{s, p, \ga} := \begin{cases}
     \left|F_{s, p, \frac{sp-1}{2}, \gamma}(e_2)\right|, \quad &sp \geq 1, 
     \\
      \B\left(\frac{sp+1}{4}, \frac{1}{2} \right)\displaystyle\int_{0}^{1}\frac{r^\frac{sp+1}{2}|1-r^\ga|^{p-1} |1-r^{-1-\ga(p-1)}|}{(1-r)^{sp+2}}dr, \quad &sp < 1.
 \end{cases}
\end{align}
 By Lemma \ref{lem: bounds for H} and \eqref{bounds for H}, we have
 \begin{align*}
     V_{s, p, \gamma_0}(x) &\geq 2\mathcal{A}_{d, sp} C^+_{s, p}|\widetilde{x}|^\frac{-sp-1}{2}x_d^{\frac{sp-1}{2}},
     \\
     V_{s, p, \gamma}(x) &\geq -2\mathcal{A}_{d, sp}C^-_{s, p, \ga}|\widetilde{x}|^\frac{-sp-1}{2}x_d^{\frac{sp-1}{2}},
 \end{align*}
 for all $x \in \Rplus$. Combining \eqref{equation: gsr plaplacian} with \eqref{equation: gsr for plaplasian reminder} we get 
\begin{align*}
    \int_{\Rplus}\int_{\Rplus}\frac{|u(x)-u(y)|^p}{|x-y|^{d+sp}}dy\,dx&\ge\mathcal{D}_{d,s,p}\int_{\Rplus}\frac{|u(x)|^p}{x_d^{sp}}dx
    \\
    &+ \frac{c_p^2 C^+_{s, p}}{C^+_{s, p} + C^-_{s, p, \ga}}\\
    &\times\int_{\Rplus}\int_{\Rplus}\frac{\left|\widetilde{v}(x)-\widetilde{v}(y)\right|^p}{|x-y|^{d+sp}}\left(x_d y_d\right)^{\frac{sp-1}{2}}|\widetilde{x}|^\frac{p\ga}{2}|\widetilde{y}|^\frac{p\ga}{2}dy\,dx,
\end{align*}
By the weighted fractional Hardy inequality \eqref{weightedsharpfractionalHardyhalfspace}, for any $0<\ga<\min\left\{\tfrac{sp+1}{2(p-1)},\tfrac{sp+1}{p}\right\}=\tfrac{sp+1}{2(p-1)}$, we have
\begin{align*}
    \int_{\Rplus}\int_{\Rplus}&\frac{\left|\widetilde{v}(x)-\widetilde{v}(y)\right|^p}{|x-y|^{d+sp}}\left(x_d y_d\right)^{\frac{sp-1}{2}}|\widetilde{x}|^\frac{p\ga}{2}|\widetilde{y}|^\frac{p\ga}{2}dy\,dx 
     \\
     \geq
     &\int_{\Rplus}\int_{\Rplus}\frac{\left|\widetilde{v}(x)-\widetilde{v}(y)\right|^p}{|x-y|^{d+sp}}\left(x_d y_d\right)^{\frac{p(s + \ga)-1}{2}}dy\,dx 
     \\
     \geq &\mathcal{D}_{d, s, p, \frac{p(s + \ga)-1}{2}}\int_{\Rplus}\frac{|\widetilde{v}(x)|^p}{x_d^{1-p\ga}}dx = \mathcal{D}_{d, s, p, \frac{sp + \tau-1}{2}}\int_{\Rplus}\frac{|u(x)|^p}{x_d^{sp - \tau}\left(x_{d-1}^2 + x_d^2\right)^{\frac{\tau}{2}}}dx,
\end{align*}
where $\tau= p\ga$. Finally, we get
\begin{align*}
    \int_{\Rplus}\int_{\Rplus}\frac{|u(x)-u(y)|^p}{|x-y|^{d+sp}}dy\,dx&\ge\mathcal{D}_{d,s,p}\int_{\Rplus}\frac{|u(x)|^p}{x_d^{sp}}dx
    \\
    &+ C_{d, s, p, \tau}\int_{\Rplus}\frac{|u(x)|^p}{x_d^{sp - \tau}\left(x_{d-1}^2 + x_d^2\right)^{\frac{\tau}{2}}}dx,
\end{align*}
where
\begin{equation}\label{constant p=2}
C_{d, s, p, \tau} := \frac{c_p^2 \mathcal{D}_{d, s, p, \frac{sp + \tau-1}{2}} C^+_{s, p}}{C^+_{s, p} + C^-_{s, p, \frac{\tau}{p}}} > 0.
\end{equation}
\\
\\
 \textbf{Case two: $1<p<2$}. Since $|u(x)-u(y)|\ge||u(x)|-|u(y)||$, without loss of generality we may assume that $u$ is nonnegative. Then, by \eqref{General Hardy with weight1<p<2}, 
\begin{align*}
\int_{\Rplus}\int_{\Rplus}\frac{|u(x)-u(y)|^p}{|x-y|^{d+sp}}dy\,dx&\ge\mathcal{D}_{d,s,p}\int_{\Rplus}\frac{|u(x)|^p}{x_d^{sp}}dx\\
&+(p-1)\int_{\Rplus}\int_{\Rplus}\frac{\left(v(x)^{p/2}-v(y)^{p/2}\right)^2}{|x-y|^{d+sp}} W(x,y)\,dy\,dx,
\end{align*}
where, again, $v(x)=u(x)x_d^{\frac{1-sp}{p}}$ and 
$$
W(x,y)=\min\left\{x_d^{\frac{sp-1}{p}},y_d^{\frac{sp-1}{p}}\right\}\max\left\{x_d^{\frac{(p-1)(sp-1)}{p}},y_d^{\frac{(p-1)(sp-1)}{p}}\right\}.
$$
Observe that 
$$
W(x,y)\ge\min\left\{x_d^{sp-1},y_d^{sp-1}\right\}.
$$
We assume first that $sp\ge1$. By the ground-state representation and part (ii) of the Corollary \ref{weighted laplacian Rd minimum}, we have 
\begin{align}\label{ineq1}
\nonumber\int_{\Rplus}\int_{\Rplus}\frac{\left(v(x)^{p/2}-v(y)^{p/2}\right)^2}{|x-y|^{d+sp}} \min\left\{x_d^{sp-1},y_d^{sp-1}\right\}\,dy\,dx&\ge2\mathcal{A}_{d, sp}\widetilde{C}^+_{s, p}\int_{\Rplus}\frac{|v(x)|^p}{|\widetilde{x}|^{sp}x_d^{1-sp}}dx\\
&=2\mathcal{A}_{d, sp}\widetilde{C}^+_{s, p}\int_{\Rplus}\frac{|u(x)|^p}{|\widetilde{x}|^{sp}}dx,
\end{align}
where 
$$
\widetilde{C}^+_{s, p} :=  C^{(2)}_{sp,\frac{sp-2}{2}}
$$
and $C^{(2)}_{sp,\frac{sp-2}{2}}$ is given by \eqref{C2(al)}. Let now $0<\ga<1$. Applying part (i) of the Corollary \ref{weighted laplacian Rd minimum} results in the inequality
\begin{align}
\begin{split}\label{ineq2}
 &\int_{\Rplus}\int_{\Rplus}\frac{\left(v(x)^{p/2}-v(y)^{p/2}\right)^2}{|x-y|^{d+sp}} \min\left\{x_d^{sp-1},y_d^{sp-1}\right\}\,dy\,dx\\
 &\ge-2\mathcal{A}_{d, sp}\widetilde{C}^{-}_{s, p, \ga}\int_{\Rplus}\frac{|u(x)|^p}{|\widetilde{x}|^{sp}}\,dx+\int_{\Rplus}\int_{\Rplus}\frac{\left(\widetilde{v}(x)-\widetilde{v}(y)\right)^2}{|x-y|^{d+sp}}|\widetilde{x}|^{\ga}|\widetilde{y}|^{\ga}\min\left\{x_d^{sp-1},y_d^{sp-1}\right\}\,dy\,dx,
 \end{split}
\end{align}
with
$$
\widetilde{C}^{-}_{s, p, \ga}:=C^{(1)}_{sp, \ga},
$$
where $C^{(1)}_{sp, \ga}$ is given by \eqref{C1(al,ga)} and $\widetilde{v}(x):=v(x)^{p/2}|\widetilde{x}|^{-\ga}$. Therefore, combining \eqref{ineq1} and \eqref{ineq2}, we arrive at
\begin{align*}
 &\int_{\Rplus}\int_{\Rplus}\frac{\left(v(x)^{p/2}-v(y)^{p/2}\right)^2}{|x-y|^{d+sp}} \min\left\{x_d^{sp-1},y_d^{sp-1}\right\}\,dy\,dx\\
 &\ge\frac{\widetilde{C}^{+}_{s, p}}{\widetilde{C}^{+}_{s, p}+\widetilde{C}^{-}_{s, p, \ga}} \int_{\Rplus}\int_{\Rplus}\frac{\left(\widetilde{v}(x)-\widetilde{v}(y)\right)^2}{|x-y|^{d+sp}}|\widetilde{x}|^{\ga}|\widetilde{y}|^{\ga}\min\left\{x_d^{sp-1},y_d^{sp-1}\right\}\,dy\,dx,
\end{align*}
Using \eqref{Hardy identity with min}, we have 
\begin{align*}
\int_{\Rplus}\int_{\Rplus}&\frac{\left(\widetilde{v}(x)-\widetilde{v}(y)\right)^2}{|x-y|^{d+sp}}|\widetilde{x}|^{\ga}|\widetilde{y}|^{\ga}\min\left\{x_d^{sp-1},y_d^{sp-1}\right\}\,dy\,dx\\
&\geq\int_{\Rplus}\int_{\Rplus}\frac{\left(\widetilde{v}(x)-\widetilde{v}(y)\right)^2}{|x-y|^{d+sp}}\left(x_d y_d\right)^{\ga}\min\left\{x_d^{sp-1},y_d^{sp-1}\right\}\,dy\,dx\\
&\ge \bar{C}_{d,sp,\ga}\int_{\Rplus}\frac{|\widetilde{v}(x)|^2}{x_d^{1-2\ga}}dx\\
&=\bar{C}_{d,sp,\ga}\int_{\Rplus}\frac{|u(x)|^p}{x_d^{sp-2\ga}|\widetilde{x}|^{2\ga}}dx,
\end{align*}
where $\bar{C}_{d,sp,\ga}$ is given by \eqref{C(d,ga,al)} with $\al=sp$. We now write $\tau=2\ga\in (0,2)$. Hence, we proved \eqref{mainresult1} with 
\begin{align}\label{equation: C for plaplacian case one}
    C_{d,s,p,\tau} :=\frac{(p-1)\widetilde{C}^{+}_{s, p}\bar{C}_{d, sp, \frac{\tau}{2}}}{\widetilde{C}^{+}_{s, p}+\widetilde{C}^{-}_{s, p, \frac{\tau}{2}}}>0.
\end{align}
Let us now assume that $sp<1$. Let $\ga_0 := -\frac{sp}{2}$. By Corollary \ref{cor: liminf sp < 1} we have 
\begin{align}
\begin{split}\label{ineq1 sp <1}
    \int_{\Rplus}\int_{\Rplus}\frac{\left(v(x)^{p/2}-v(y)^{p/2}\right)^2}{|x-y|^{d+sp}} \min\left\{x_d^{sp-1},y_d^{sp-1}\right\}\,dy\,dx&\ge2\mathcal{A}_{d, sp}\widehat{C}^+_{s, p}\int_{\Rplus}\frac{|v(x)|^p}{|\widetilde{x}|}dx,
\end{split}
\end{align}
where 
$$\widehat{C}^+_{s, p} := C^{(4)}_{sp, -\frac{sp}{2}}$$
and $C^{(4)}_{sp, -\frac{sp}{2}}$ is given by \eqref{C4(al)}. Also by Corollary \ref{cor: liminf sp < 1}, for $\ga \in (0, sp)$ we have 
\begin{align}
\begin{split}\label{ineq2 sp <1}
 &\int_{\Rplus}\int_{\Rplus}\frac{\left(v(x)^{p/2}-v(y)^{p/2}\right)^2}{|x-y|^{d+sp}} \min\left\{x_d^{sp-1},y_d^{sp-1}\right\}\,dy\,dx\\
 &\ge-2\mathcal{A}_{d, sp}\widehat{C}^{-}_{s, p, \ga}\int_{\Rplus}\frac{|v(x)|^p}{|\widetilde{x}|}\,dx+\int_{\Rplus}\int_{\Rplus}\frac{\left(\widetilde{v}(x)-\widetilde{v}(y)\right)^2}{|x-y|^{d+sp}}|\widetilde{x}|^{\ga}|\widetilde{y}|^{\ga}\min\left\{x_d^{sp-1},y_d^{sp-1}\right\}\,dy\,dx,
 \end{split}
\end{align}
with
$$\widehat{C}^-_{s, p, \ga} := C^{(3)}_{sp,\ga},$$
where $C^{(3)}_{sp,\ga}$ is given by \eqref{C3(al,ga)} and $\widetilde{v}(x) := v(x)^{p/2}|\widetilde{x}|^{-\ga}$. Similarly as in previous cases, combining \eqref{ineq1 sp <1} with \eqref{ineq2 sp <1} and applying \eqref{Hardy identity with min}, we get
\begin{align*}
 &\int_{\Rplus}\int_{\Rplus}\frac{\left(v(x)^{p/2}-v(y)^{p/2}\right)^2}{|x-y|^{d+sp}} \min\left\{x_d^{sp-1},y_d^{sp-1}\right\}\,dy\,dx\\
 &\ge\frac{\widehat{C}^{+}_{s, p}}{\widehat{C}^{+}_{s, p}+\widehat{C}^{-}_{s, p, \ga}} \int_{\Rplus}\int_{\Rplus}\frac{\left(\widetilde{v}(x)-\widetilde{v}(y)\right)^2}{|x-y|^{d+sp}}|\widetilde{x}|^{\ga}|\widetilde{y}|^{\ga}\min\left\{x_d^{sp-1},y_d^{sp-1}\right\}\,dy\,dx
 \\
 &\geq \frac{\widehat{C}^{+}_{s, p}\bar{C}_{d, sp, \ga}}{\widehat{C}^{+}_{s, p}+\widehat{C}^{-}_{s, p, \ga}}\int_{\Rplus}\frac{|\widetilde{v}(x)|^2}{x_d^{1- 2\ga}}dx
 \\
 &= \frac{\widehat{C}^{+}_{s, p}\bar{C}_{d, sp, \tau/2}}{\widehat{C}^{+}_{s, p}+\widehat{C}^{-}_{s, p, \tau/2}}\int_{\Rplus}\frac{|u(x)|^p}{x_d^{sp- \tau}\left( x_{d-1}^2 + x_d^2\right)^{\frac{\tau}{2}}}dx,
\end{align*}
where $\tau = 2\ga$. Collecting all the above inequalities gives the desired result with the constant 
$$C_{d,s,p,\tau} := \frac{(p-1)\widehat{C}^{+}_{s, p}\bar{C}_{d, sp, \frac{\tau}{2}}}{\widehat{C}^{+}_{s, p}+\widehat{C}^{-}_{s, p, \frac{\tau}{2}}}.$$
The proof is now complete.
\end{proof}
\begin{proof}[Proof of Theorem \ref{thm2}] By \eqref{groundstateSobolevBregman}, we have the following Hardy identity for Sobolev--Bregman form:
\begin{align}\label{critical hardy for SB}
E_p[u]=\mathcal{D}'_{d,\alpha,p}\int_{\Rplus}\frac{|u(x)|^p}{x_d^{\alpha}}\,dx+\frac{2}{p}\int_{\Rplus}\int_{\Rplus}\frac{F_p\left(v(x),v(y)\right)x_d^{\frac{\alpha-1}{q}}y_d^{\frac{\alpha-1}{p}}}{|x - y|^{\alpha+d}}\,dx\,dy,
\end{align}
where $q=\tfrac{p}{p-1}$ and $v(x) := u(x)x_d^{\frac{1-\alpha}{p}}$. Let 
\begin{equation}\label{cp*}
    c_p^* := \inf_{t \in \R\setminus\{1\}} \frac{|t|^p-1-p(t-1)}{\left(t^{\langle\frac{p}{2}\rangle} - 1\right)^2} > 0.
\end{equation}    
It then holds $F_p(a,b)\ge c_p^{*}\left(a^{\langle p/2\rangle}-b^{\langle p/2\rangle}\right)^2$, $a,b\in\R$. Hence, we observe that
\begin{align*}
    &\int_{\Rplus}\int_{\Rplus}\frac{F_p\left(v(x),v(y)\right)x_d^{\frac{\alpha-1}{q}}y_d^{\frac{\alpha-1}{p}}}{|x - y|^{\alpha+d}}\,dx\,dy
    \\
    \geq&c_p^*\int_{\Rplus}\int_{\Rplus}\frac{\left( v(x)^{\langle\frac{p}{2} \rangle} - v(y)^{\langle \frac{p}{2} \rangle} \right)^2x_d^{\frac{\alpha-1}{q}}y_d^{\frac{\alpha-1}{p}}}{|x - y|^{\alpha+d}}\,dx\,dy
    \\
    = &\frac{c_p^*}{2}\int_{\Rplus}\int_{\Rplus}\frac{\left( v(x)^{\langle\frac{p}{2} \rangle} - v(y)^{\langle \frac{p}{2} \rangle} \right)^2\Big(x_d^{\frac{\alpha-1}{q}}y_d^{\frac{\alpha-1}{p}} + x_d^{\frac{\alpha-1}{p}}y_d^{\frac{\alpha-1}{q}}\Big)}{|x - y|^{\alpha+d}}\,dx\,dy
    \\
    \geq&c_p^*\int_{\Rplus}\int_{\Rplus}\frac{\left( v(x)^{\langle\frac{p}{2} \rangle} - v(y)^{\langle \frac{p}{2} \rangle} \right)^2 }{|x - y|^{\alpha+d}}(x_dy_d)^{\frac{\alpha-1}{2}}\,dx\,dy,
\end{align*}
where in the last passage we applied the elementary inequality $a^2+b^2\ge 2ab$. Thus, we get 
\begin{align}
 \begin{split}\label{equation: reduced GSR SB}
E_p[u]\ge&\mathcal{D}'_{d,\alpha,p}\int_{\Rplus}\frac{|u(x)|^p}{x_d^{\alpha}}\,dx
    \\
    +&\frac{2c_p^*}{p}\int_{\Rplus}\int_{\Rplus}\frac{\left( v(x)^{\langle\frac{p}{2} \rangle} - v(y)^{\langle \frac{p}{2} \rangle} \right)^2 }{|x - y|^{\alpha+d}}(x_dy_d)^{\frac{\alpha-1}{2}}\,dx\,dy.
 \end{split} 
\end{align} Observe that the expression
$$\int_{\Rplus}\int_{\Rplus}\frac{\left( v(x)^{\langle\frac{p}{2} \rangle} - v(y)^{\langle \frac{p}{2} \rangle} \right)^2 }{|x - y|^{\alpha+d}}(x_dy_d)^{\frac{\alpha-1}{2}}\,dx\,dy.$$
coincides with the remainder term in \eqref{equation: gsr plaplacian} in the quadratic case, with $v$ replaced by $v^{\langle\frac{p}{2}\rangle}$. Now, following the same steps as in case one of the proof of Theorem $\ref{thm1}$ we arrive at the desired result with the constant
$$C'_{d, \alpha, p, \tau} := \frac{2c_p^*}{p}C_{d, \frac{\al}{2}, 2, \tau},$$
where $C_{d, \frac{\al}{2}, 2, \tau}$ is given by \eqref{constant p=2}.
\end{proof}


\begin{thebibliography}{10}

\bibitem{MR5026388}
{\sc Adimurthi, Jana, P., and Roy, P.}
\newblock Boundary fractional {H}ardy's inequality in dimension one: the critical case.
\newblock {\em Commun. Contemp. Math. 28}, 4 (2026), Paper No. 2550051, 14.

\bibitem{MR3807591}
{\sc Adimurthi, and Mallick, A.}
\newblock A {H}ardy type inequality on fractional order {S}obolev spaces on the {H}eisenberg group.
\newblock {\em Ann. Sc. Norm. Super. Pisa Cl. Sci. (5) 18}, 3 (2018), 917--949.

\bibitem{MR5018356}
{\sc Adimurthi, Roy, P., and Sahu, V.}
\newblock Fractional boundary {H}ardy inequality for the critical cases.
\newblock {\em J. Funct. Anal. 290}, 8 (2026), Paper No. 111351, 63.

\bibitem{MR5015195}
{\sc Adimurthi, Roy, P., and Sahu, V.}
\newblock Fractional {H}ardy inequality with singularity on submanifold.
\newblock {\em Calc. Var. Partial Differential Equations 65}, 2 (2026), Paper No. 62, 40.

\bibitem{MR4815911}
{\sc Anoop, T.~V., Roy, P., and Roy, S.}
\newblock On fractional {O}rlicz-{H}ardy inequalities.
\newblock {\em J. Math. Anal. Appl. 543}, 2 (2025), Paper No. 128980, 29.

\bibitem{MR1254832}
{\sc Beckner, W.}
\newblock Pitt's inequality and the uncertainty principle.
\newblock {\em Proc. Amer. Math. Soc. 123}, 6 (1995), 1897--1905.

\bibitem{MR2424899}
{\sc Benguria, R.~D., Frank, R.~L., and Loss, M.}
\newblock The sharp constant in the {H}ardy-{S}obolev-{M}az'ya inequality in the three dimensional upper half-space.
\newblock {\em Math. Res. Lett. 15}, 4 (2008), 613--622.

\bibitem{MR4800921}
{\sc Bianchi, F., Brasco, L., and Zagati, A.~C.}
\newblock On the sharp {H}ardy inequality in {S}obolev-{S}lobodecki\u i\ spaces.
\newblock {\em Math. Ann. 390}, 1 (2024), 493--555.

\bibitem{MR2663757}
{\sc Bogdan, K., and Dyda, B.}
\newblock The best constant in a fractional {H}ardy inequality.
\newblock {\em Math. Nachr. 284}, 5-6 (2011), 629--638.

\bibitem{MR4589708}
{\sc Bogdan, K., Grzywny, T., Pietruska-Pałuba, K., and Rutkowski, A.}
\newblock Nonlinear nonlocal {D}ouglas identity.
\newblock {\em Calc. Var. Partial Differential Equations 62}, 5 (2023), Paper No. 151, 31.

\bibitem{MR4851904}
{\sc Bogdan, K., Gutowski, M., and Pietruska-Pałuba, K.}
\newblock Polarized {H}ardy-{S}tein identity.
\newblock {\em J. Funct. Anal. 288}, 7 (2025), Paper No. 110827, 39.

\bibitem{MR4372148}
{\sc Bogdan, K., Jakubowski, T., Lenczewska, J., and Pietruska-Pałuba, K.}
\newblock Optimal {H}ardy inequality for the fractional {L}aplacian on {$L^p$}.
\newblock {\em J. Funct. Anal. 282}, 8 (2022), Paper No. 109395, 31.

\bibitem{DiebTemgoua2026arxiv}
{\sc Dieb, A., and Temgoua, R.~Y.}
\newblock Fractional {H}ardy inequalities on {$C^{1,1}$} open sets.
\newblock arXiv preprint arXiv:2602.10463, 2026.

\bibitem{MR2755892}
{\sc Dyda, B.}
\newblock Fractional {H}ardy inequality with a remainder term.
\newblock {\em Colloq. Math. 122}, 1 (2011), 59--67.

\bibitem{MR2910984}
{\sc Dyda, B., and Frank, R.~L.}
\newblock Fractional {H}ardy-{S}obolev-{M}az'ya inequality for domains.
\newblock {\em Studia Math. 208}, 2 (2012), 151--166.

\bibitem{MR4454384}
{\sc Dyda, B., and Kijaczko, M.}
\newblock On density of compactly supported smooth functions in fractional {S}obolev spaces.
\newblock {\em Ann. Mat. Pura Appl. (4) 201}, 4 (2022), 1855--1867.

\bibitem{MR4708667}
{\sc Dyda, B., and Kijaczko, M.}
\newblock Sharp fractional {H}ardy inequalities with a remainder for {$1 < p < 2$}.
\newblock {\em J. Funct. Anal. 286}, 9 (2024), Paper No. 110373, 19.

\bibitem{MR4705882}
{\sc Dyda, B., and Kijaczko, M.}
\newblock Sharp weighted fractional {H}ardy inequalities.
\newblock {\em Studia Math. 274}, 2 (2024), 153--171.

\bibitem{MR3803664}
{\sc Dyda, B., Lehrbäck, J., and Vähäkangas, A.~V.}
\newblock Fractional {H}ardy-{S}obolev type inequalities for half spaces and {J}ohn domains.
\newblock {\em Proc. Amer. Math. Soc. 146}, 8 (2018), 3393--3402.

\bibitem{MR3237044}
{\sc Dyda, B., and V\"ah\"akangas, A.~V.}
\newblock A framework for fractional {H}ardy inequalities.
\newblock {\em Ann. Acad. Sci. Fenn. Math. 39}, 2 (2014), 675--689.

\bibitem{MR4597627}
{\sc Fischer, F.}
\newblock A non-local quasi-linear ground state representation and criticality theory.
\newblock {\em Calc. Var. Partial Differential Equations 62}, 5 (2023), Paper No. 163, 33.

\bibitem{MR2863763}
{\sc Frank, R.~L., and Loss, M.}
\newblock Hardy-{S}obolev-{M}az'ya inequalities for arbitrary domains.
\newblock {\em J. Math. Pures Appl. (9) 97}, 1 (2012), 39--54.

\bibitem{MR2469027}
{\sc Frank, R.~L., and Seiringer, R.}
\newblock Non-linear ground state representations and sharp {H}ardy inequalities.
\newblock {\em J. Funct. Anal. 255}, 12 (2008), 3407--3430.

\bibitem{MR2723817}
{\sc Frank, R.~L., and Seiringer, R.}
\newblock Sharp fractional {H}ardy inequalities in half-spaces.
\newblock In {\em Around the research of {V}ladimir {M}az'ya. {I}}, vol.~11 of {\em Int. Math. Ser. (N. Y.)}. Springer, New York, 2010, pp.~161--167.

\bibitem{MR2360010}
{\sc Gradshteyn, I.~S., and Ryzhik, I.~M.}
\newblock {\em Table of integrals, series, and products}, seventh~ed.
\newblock Elsevier/Academic Press, Amsterdam, 2007.

\bibitem{MR4885983}
{\sc Gutowski, M., and Kwa\'snicki, M.}
\newblock Beurling-{D}eny formula for {S}obolev-{B}regman forms.
\newblock {\em Nonlinear Anal. 257\/} (2025), Paper No. 113808, 14.

\bibitem{MR2442182}
{\sc Han, J., Niu, P., and Qin, W.}
\newblock Hardy inequalities in half spaces of the {H}eisenberg group.
\newblock {\em Bull. Korean Math. Soc. 45}, 3 (2008), 405--417.

\bibitem{MR436854}
{\sc Herbst, I.~W.}
\newblock Spectral theory of the operator {$(p\sp{2}+m\sp{2})\sp{1/2}-Ze\sp{2}/r$}.
\newblock {\em Comm. Math. Phys. 53}, 3 (1977), 285--294.

\bibitem{Kijaczko2025arxiv}
{\sc Kijaczko, M.}
\newblock Best constants for {H}ardy inequalities in {T}riebel--{L}izorkin spaces.
\newblock arXiv preprint arXiv:2512.18688, 2025.

\bibitem{MR4720167}
{\sc Kijaczko, M., and Lenczewska, J.}
\newblock Sharp {H}ardy inequalities for {S}obolev-{B}regman forms.
\newblock {\em Math. Nachr. 297}, 2 (2024), 549--559.

\bibitem{KijaczkoSahu2026}
{\sc Kijaczko, M., and Sahu, V.}
\newblock Weighted fractional {H}ardy--{S}obolev and {H}ardy--{S}obolev--{M}az'ya inequalities with singularities on flat submanifold.
\newblock {\em Commun. Contemp. Math.\/} (2026).
\newblock Paper No. 2650016.

\bibitem{MR2659764}
{\sc Loss, M., and Sloane, C.}
\newblock Hardy inequalities for fractional integrals on general domains.
\newblock {\em J. Funct. Anal. 259}, 6 (2010), 1369--1379.

\bibitem{Mazya1972}
{\sc Maz'ya, V.}
\newblock On a degenerating problem with directional derivative.
\newblock {\em Mat. Sb. (N.S.) 87(129)\/} (1972), 417--454.
\newblock English translation: {M}ath. {USSR} {S}b. \textbf{16} (1972), no.~3, 429--469.

\bibitem{MR2755141}
{\sc Maz'ya, V.}
\newblock {\em Sobolev spaces: with applications to elliptic partial differential equations}, second, augmented~ed., vol.~46 of {\em Ergebnisse der Mathematik und ihrer Grenzgebiete. 3. Folge}.
\newblock Springer, Heidelberg, 2011.

\bibitem{MR2508844}
{\sc Maz'ya, V., and Shaposhnikova, T.}
\newblock A collection of sharp dilation invariant integral inequalities for differentiable functions.
\newblock In {\em Sobolev spaces in mathematics. {I}}, vol.~8 of {\em Int. Math. Ser. (N. Y.)}. Springer, New York, 2009, pp.~223--247.

\bibitem{MR2400106}
{\sc Pinchover, Y., Tertikas, A., and Tintarev, K.}
\newblock A {L}iouville-type theorem for the {$p$}-{L}aplacian with potential term.
\newblock {\em Ann. Inst. H. Poincar\'e{} C Anal. Non Lin\'eaire 25}, 2 (2008), 357--368.

\bibitem{MR3736851}
{\sc Psaradakis, G.}
\newblock Hardy-{S}obolev-{M}az'ya and related inequalities in the half-space.
\newblock {\em J. Elliptic Parabol. Equ. 3}, 1-2 (2017), 127--134.

\bibitem{MR4908058}
{\sc Rawat, R., and Roy, H.}
\newblock Fractional {H}ardy's inequality for half-spaces in the {H}eisenberg group.
\newblock {\em J. Math. Anal. Appl. 551}, 1 (2025), Paper No. 129674, 15.

\bibitem{MR4849884}
{\sc Sahu, V.}
\newblock Weighted fractional {H}ardy inequalities with singularity on any flat submanifold.
\newblock {\em J. Math. Anal. Appl. 546}, 2 (2025), Paper No. 129227, 16.

\bibitem{MR2823046}
{\sc Sloane, C.~A.}
\newblock A fractional {H}ardy-{S}obolev-{M}az'ya inequality on the upper halfspace.
\newblock {\em Proc. Amer. Math. Soc. 139}, 11 (2011), 4003--4016.

\bibitem{MR2124873}
{\sc Tidblom, J.}
\newblock A {H}ardy inequality in the half-space.
\newblock {\em J. Funct. Anal. 221}, 2 (2005), 482--495.

\bibitem{MR1717839}
{\sc Yafaev, D.}
\newblock Sharp constants in the {H}ardy-{R}ellich inequalities.
\newblock {\em J. Funct. Anal. 168}, 1 (1999), 121--144.

\end{thebibliography}
\end{document}